\numberwithin{figure}{section}
\newtheorem{thm}{Theorem}[section]
\newtheorem{prop}[thm]{Proposition}
\newtheorem{lem}[thm]{Lemma}
\theoremstyle{definition}
\newtheorem{defn}[thm]{Definition}
\theoremstyle{remark}
\newtheorem{rem}[thm]{Remark}
\newcommand{\sF}{{\mathcal F}}
\newcommand{\sG}{{\mathcal G}}
\newcommand{\uu}{{u}}
\newcommand{\ZZ}{{\mathbb Z}}
\newcommand{\W}{{\nu}}
\numberwithin{equation}{section}
\newcommand{\f}{{f}}
\newcommand{\F}{\overline{F}}
\newcommand{\G}{\overline{G}}
\newcommand{\ord}{{\rm ord}}
\newcommand{\dgt}{{d}}
\newcommand{\Sum}{{S}}
\newcommand{\SP}{{\rm S}}
\title{Products of   Binomial Coefficients and  Unreduced Farey Fractions}
\author{Jeffrey C. Lagarias}
\address{Department of Mathematics, University of Michigan,
Ann Arbor, MI 48109-1043, USA}
\email{lagarias@umich.edu}
\author{Harsh Mehta}
\address{Department of Mathematics, University of South Carolina,
Columbia, SC 29208, USA}
\email{hmehta@math.sc.edu}
\subjclass[2010]{Primary: 11B65, Secondary:  05A10, 11B57, 11N05, 11N64}
\thanks{Work of the first author was supported by NSF Grants DMS-1101373 and DMS-1401224.}
\begin{document}

\begin{abstract}
This paper studies the product $\G_n$ 
of the binomial coefficients in the $n$-th row of Pascal's triangle,
which equals the reciprocal of  the product 
of all the reduced and unreduced Farey fractions of order $n$. 
It studies its size as a real number, measured by  $\log(\G_n)$, and its prime factorization,
measured by the order of divisibility  $\W_p(\G_n) =\ord_p(\G_n)$  by a fixed prime $p$, each viewed
as a function of $n$. It derives three formulas for 
$\ord_p(\G_n)$, two of which relate it to base $p$ radix expansions of integers up to $n$,
and which  display different facets of its behavior. 
These formulas are used to determine the maximal growth rate of each
$\ord_p(\G_n)$ and to explain structure of  
the fluctuations of these functions. 
It also defines  analogous functions $\W_b(\G_n)$ for all integer bases $b \ge 2$
using base $b$ radix expansions replacing base $p$-expansions. 
A final topic relates  factorizations of $\G_n$ to Chebyshev-type prime-counting estimates
and the prime number theorem.

\end{abstract}

\maketitle

%
%
%
\section{Introduction}\label{sec1}

The complete products of binomial coefficients of order $n$ are the integers
$$
\G_n:= \prod_{k=0}^n {{n}\choose{k}}.
$$
This integer sequence begins $\G_1=1, \,\G_2=2, \,\G_3=9, \, \G_4=96, \,\G_5=2500,  \G_6=162000,$ and   $\G_7=26471025,$ 
and appears as  
A001142 in OEIS \cite{OEIS}.
The integer $\G_n$ is the reciprocal of 
 the product $G_n$ of all nonzero unreduced Farey fractions of order $n$,
  as we describe in Section \ref{sec2a}.
We encountered  unreduced Farey products  $G_n$ while investigating  
the  products $F_n$ of all nonzero (reduced) Farey fractions.
 The connections with Farey fractions and their relations   to prime number theory  motivated this work.

We study the size of the integers  $\G_n$ viewed as real numbers
and   the behavior
of their prime factorizations, as functions of $n$.
Since  the $\G_n$  grow exponentially fast we measure their size in terms of  
the rescaled function
\begin{equation}
\W_{ \infty}(\G_n) := \log(\G_n).
\end{equation}
It is easy to  show that  $\log(\G_n)$ has  smooth growth,
given by  an asymptotic expansion having leading term $\frac{1}{2} n^2$. 
We derive the first few terms of its asymptotic expansion in Section \ref{sec2b},
which are obtainable using Stirling's formula.  We observe that 
from the Farey fraction viewpoint this asymptotic estimate 
 has an analogy with a formulation of the Riemann
hypothesis for Farey fractions due to Mikol\'{a}s \cite{Mik51}.
The function  $\log(\G_n)$ actually
has a complete  asymptotic expansion in negative powers $\frac{1}{n^k}$ valid to all orders
after its first few lead terms.  This full asymptotic expansion  is derived in  Appendix A,
in which we make use of  known asymptotics for  the Barnes $G$-function.

The relations between primes encoded in the factorizations of  binomial products $\G_n$ 
 seem to be of deep arithmetic significance. 
These factorizations are
described
by the  functions 
\begin{equation}\label{G-prime}
\W_{p}(\G_n):= \ord_p(\G_n),
\end{equation} 
 with $p^{\ord_p(\G_n)}$ denoting the maximal power of $p$ dividing $\G_n$.
 The prime factorizations of  the first few  $\G_n$ are
 $\G_1=1, \,\G_2=2, \,\G_3=3^2, \, \G_4=2^5 \cdot 3, \,\G_5=2^2 \cdot 5^4, 
\G_6=2^4\cdot 3^4 \cdot 5^3$
and   
$\G_7=  3^2 \cdot 5^2 \cdot 7^6$. 
 These initial values already exhibit visible oscillations in $\ord_2(\G_n)$,
 and each  function $ \ord_p(\G_n)$  separately has  a somewhat
 complicated structure of oscillations. 
Figure \ref{fig21-ord2}  plots  values of   $\ord_2(\G_n)$
 for $1 \le n \le 1023$. This plot exhibits significant structure in the  behavior of
 $\ord_2(\G_n)$,  visible as a set of stripes  in intervals between successive powers of $2$.


\begin{figure}[!htb]
\includegraphics[width=130mm]{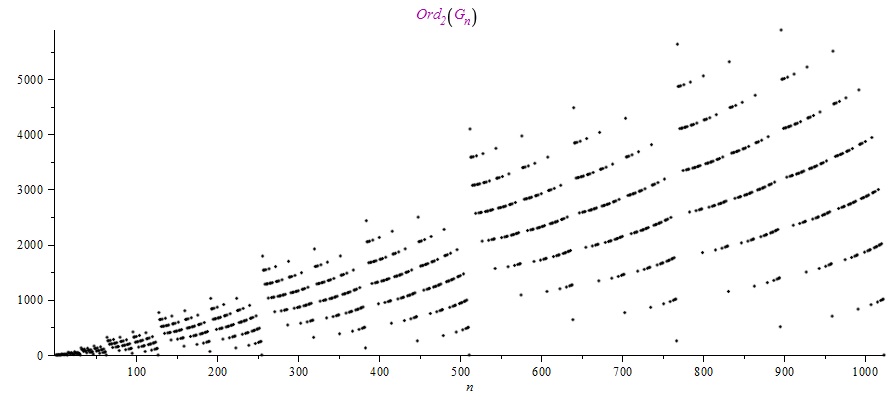}
\caption{$\W_2(n) := \ord_2(\G_n)$, $1 \le n \le 1023 =2^{10}-1$.}
\label{fig21-ord2}
\end{figure}

The behavior of the prime factorizations of $\G_n$ 
is the main focus of this paper.
We derive three  different  formulas for 
$\ord_p(\G_n)$, 
given  in Sections  \ref{sec3}, \ref{sec4} and \ref{sec5}, respectively.
Each of the formulas encodes different information about $\ord_p(\G_n)$.
The first of these formulas follows from the unreduced Farey product interpretation.
 The second of these formulas 
 relates $\ord_p(\G_n)$   to the base $p$ expansion of $n$,
which  relates to values of  the Riemann zeta
function $\zeta(s)$ on the line $Re(s) =0$  through a result of Delange \cite{Del75}. The third of these formulas
directly involves  the base $p$ radix expansion of $n$, and  
is linear and bilinear in the radix expansion digits.

The  second and third
formulas for $\W_p(\G_n)$ 
generalize  to notions attached to radix expansions to an arbitrary integer base. 
For each  $b \ge 2$ we define   integer-valued
functions $\W_b(\G_n)$ (resp. $\W_b^{\ast}(\G_n)$) for $n \ge 1$, 
which for primes $p$ satisfy
$\W_p(\G_n) = \W_p^{\ast}(\G_n) = \ord_p(\G_n)$
for all $ n \ge 1.$
In Appendix B we prove these definitions agree in general:   for all $b \ge 2$, 
\begin{equation}
\W_b(\G_n) = \W_{b}^{\ast}(\G_n) \quad \mbox{for all} \quad n \ge 1. 
\end{equation}
The functions $\W_b(\G_n)$ for composite $b$ can no longer  be interpreted as
specifying the amount of ``divisibility by $b$" of the integer $\G_n$.
It is an interesting problem to determine what arithmetic information about $\G_n$ the
functions $\W_b(\G_n)$  might encode.

From the formulas obtained for 
$\ord_p(\G_n)$ 
we  deduce results on  its  size 
 and the behavior of its fluctuations.
 We show that 
$$
0 \le \ord_p(\G_n) < n \log_p n,
$$
and that
$$\limsup_{n \to \infty}  \frac{\ord_p(\G_n)}{n \log_p n} =1.
$$
It follows  that $n \log_p n$ is the correct scale of  growth for this function. 
We also show that  each function $\ord_p(\G_n)$ oscillates infinitely many times between 
the upper and lower  bounds as $n \to \infty$.

In Section \ref{sec7} we compare the three formulas for $\ord_p(\G_n)$.
We show that between them they account for much of the 
 structure visible in the picture in Figure \ref{fig21-ord2}.

In  Section \ref{sec7b} we present  direct connections between
individual binomial products $\G_n$ and the distribution of prime numbers.
There is a tension between the smooth asymptotic growth of $\G_n$
and the oscillatory nature of the divisibility of $\G_n$ by 
individual primes. This  tension 
encodes a great deal of information about the structure of prime numbers.
Our results yield  a  Chebyshev-type  estimate for $\pi(x)$
and suggest  the possibility of a  approach to 
 the prime number theorem via radix expansion properties of $n$ to prime bases.
In another direction, a connection of the $\G_n$ to the Riemann hypothesis may   exist 
via their relation to products of Farey fractions, see  \cite{LM14r}.


%
%

\section{Unreduced Farey Fractions}\label{sec2a}

The {\em Farey sequence} $\sF_n$ of order $n$
 is the sequence of  reduced fractions $\frac{h}{k}$ between $0$ and $1$ 
(including $0$ and $1$) which, when in lowest terms, have denominators less than or equal to $n$, arranged in order of increasing size.
 It is the set 
$$
\sF_n := \{ \frac{h}{k}: 0 \le h \le k \le n: \, gcd(h,k) = 1.\}
$$
The Farey sequences encode deep arithmetic properties of the integers
and are important in Diophantine approximation, e.g. \cite[Chap. III]{HW79}.)
The distribution of the Farey fractions approaches the uniform distribution on $[0, 1]$
as $n \to \infty$ in the sense of measure theory, and the rate at which it approaches
the uniform distribution as a function of $n$ is related to the Riemann hypothesis
by a theorem of Franel  \cite{Fra24}.
Extensions of Franel's result are  given  in many later works, including Landau \cite{Lan24}, \cite{Landau27}, 
Mikol\'{a}s \cite{Mik49}, \cite{Mik51}, and Huxley \cite[Chap. 9 ]{Hux72}.

The  Farey sequences have a simpler cousin, the {\em unreduced Farey sequence} $\sG_n$, 
which is the   ordered sequence of all reduced and unreduced fractions 
between $0$ and $1$ with denominator of size at most $n$. 
We  define the positive unreduced Farey sequence by omitting the value $0$, obtaining
$$
\sG_n^{\ast}  := \{ \frac{h}{k}: 1 \le h \le k \le n\}.
$$
We let $\Phi^{\ast}(n) = | \sG_n^{\ast}|$ denote the number of positive unreduced Farey fractions,
and clearly 
\begin{equation}\label{eq-phi-ast}
\Phi^{\ast}(n) = \binom{n+1}{2} =\frac{1}{2} n (n+1).
\end{equation}
We order these unreduced fractions in increasing order, breaking ties between equal fractions 
ordering them by increasing denominator. For example, we have
$$
\sG_4^{\ast}:= \{ \frac{1}{4}, \frac{1}{3}, \frac{1}{2}, \frac{2}{4}, \frac{2}{3}, \frac{3}{4}, \frac{1}{1}, \frac{2}{2}, \frac{3}{3}, \frac{4}{4} \}.
$$
We  label the fractions in $\sG_n^{\ast}$ in this order as $\rho_r^{*} = \rho_{r,n}^{\ast}$, and write
$$
\sG_n^{\ast} = \{ \rho_{r}^{*}= \rho_{r, n}^{*}: 1\le r \le \Phi^{\ast}(n)\},
$$
Then we can  define the {\em unreduced Farey product} as
\begin{equation}\label{UFP}
G_n :=\prod_{r=1}^{\Phi^{\ast}(n)} \rho_{r,n}^{\ast}= \frac{N_n^{\ast}}{D_n^{\ast}},
\end{equation}
in which  $N_n^{\ast}$  (resp. $D_n^{\ast}$) denotes the product of the numerators 
(resp. denominators) of all $\rho_{r,n}^{\ast}$. 
The numerator function 
$$
N_n^{\ast}=\prod_{k=1}^n  k! 
$$
has been called the  {\em superfactorial function}
and appears as sequence A000178 in OEIS \cite{OEIS}.
The  denominator function 
$$
D_n^{\ast}=\prod_{k=1}^n  k^k
$$
has been called  the {\em hyperfactorial function},
and appears as sequence A002109 in OEIS \cite{OEIS}.
The hyperfactorial $D_n^{\ast}$ in expressible in terms of factorials as
\begin{equation} \label{hyper}
D_n^{\ast}  = \frac{ \prod_{k=1}^n k^n}{ 1^{n-1} 2^{n-2} \cdots (n-1)^1 \cdot n^0} 
= \frac{(n!)^n} { (n-1)! \cdots 1!} = \frac{ (n!)^n}{ N_{n-1}^{\ast}}.
\end{equation}
 It  was  studied by Glaisher \cite{Gla1878}, \cite{Gla1894},
starting in 1878.  The integers $D_n^{\ast}$  were later found  to be the sequence
of discriminants of the  Hermite polynomials\footnote{
The hyperfactorials occur as the discriminants
of the Hermite polynomials up to factor of a power of $2$ in the usual normalization 
$H_n(x)=(-1)^n e^{x^2/2} \frac{d^n}{dx^n} (e^{- x^2/2})$, see Szego \cite[ (6.71.7)]{Sz39}.
The Hermite polynomials are a fundamental family of orthogonal polynomials,
and this connection  hints at a deep importance of
the hyperfactorial function.}
 in the probabilist's normalization $He_n(x)= 2^{-n/2}H_n(\frac{x}{\sqrt{2}})$.

The unreduced Farey products  $G_n$
have  their reciprocal $\G_n = 1/G_n$ expressible in terms of binomial coefficients.

 \begin{thm}\label{th21}
The unreduced Farey product $G_n$ has its
reciprocal $\G_n =1/G_n$ given  by the  product of 
binomial coefficients
\begin{equation}\label{fullprod}
\G_n =\prod_{j=0}^n {\binom{n}{j}}.
\end{equation}
Thus $1/G_n$ is always an integer. 
\end{thm}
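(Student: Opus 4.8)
The plan is to compute the unreduced Farey product $G_n = N_n^\ast / D_n^\ast$ directly from the explicit formulas already recorded in the excerpt, namely $N_n^\ast = \prod_{k=1}^n k!$ (the superfactorial) and $D_n^\ast = \prod_{k=1}^n k^k$ (the hyperfactorial), and then show that its reciprocal equals $\prod_{j=0}^n \binom{n}{j}$. Since $\binom{n}{j} = \frac{n!}{j!\,(n-j)!}$, the product over $j$ telescopes nicely: I would write
\begin{equation*}
\prod_{j=0}^n \binom{n}{j} = \frac{(n!)^{n+1}}{\prod_{j=0}^n j!\,(n-j)!} = \frac{(n!)^{n+1}}{\bigl(\prod_{j=0}^n j!\bigr)^2},
\end{equation*}
using that $\prod_{j=0}^n (n-j)! = \prod_{j=0}^n j!$ by reindexing. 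Now $\prod_{j=0}^n j! = \prod_{j=1}^n j! = N_n^\ast$ (the $j=0$ factor is $1$), so the right-hand side is $(n!)^{n+1}/(N_n^\ast)^2$.

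Next I would reconcile this with $D_n^\ast / N_n^\ast$. The identity \eqref{hyper} from the excerpt already gives $D_n^\ast = (n!)^n / N_{n-1}^\ast$, and $N_{n-1}^\ast = N_n^\ast / n!$, so $D_n^\ast = (n!)^{n+1}/N_n^\ast$. Therefore
\begin{equation*}
\frac{1}{G_n} = \frac{D_n^\ast}{N_n^\ast} = \frac{(n!)^{n+1}}{N_n^\ast \cdot N_n^\ast} = \frac{(n!)^{n+1}}{(N_n^\ast)^2} = \prod_{j=0}^n \binom{n}{j},
\end{equation*}
which is exactly \eqref{fullprod}. The integrality of $1/G_n$ is then immediate since it is a product of integer binomial coefficients (alternatively, since each $\binom{n}{j} \in \ZZ$).

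There is really no serious obstacle here; the only care needed is bookkeeping with the factorial products and making sure the reindexing $j \mapsto n-j$ and the handling of the $j=0,\,j=n$ endpoint factors are done correctly. One small preliminary point I would make explicit at the start: $G_n = N_n^\ast/D_n^\ast$ with the stated superfactorial and hyperfactorial expressions follows from \eqref{UFP} once one observes that, as $k$ ranges over $1$ to $n$, the fractions in $\sG_n^\ast$ with denominator exactly $k$ are $\frac{1}{k}, \frac{2}{k}, \dots, \frac{k}{k}$, contributing numerator product $k!$ and denominator product $k^k$; multiplying over $k$ gives the claimed $N_n^\ast$ and $D_n^\ast$. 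With that in hand, the theorem reduces to the elementary factorial identity above, and \eqref{hyper} does most of the work.
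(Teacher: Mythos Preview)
Your proof is correct and follows essentially the same approach as the paper: both arguments identify $\G_n = D_n^\ast/N_n^\ast$ from the definition of the unreduced Farey product and then verify the factorial identity $D_n^\ast/N_n^\ast = \prod_{j=0}^n \binom{n}{j}$ by direct manipulation. The only cosmetic difference is that the paper rewrites the ratio $D_n^\ast/N_n^\ast$ step by step into the binomial product, whereas you compute each side separately as $(n!)^{n+1}/(N_n^\ast)^2$ and invoke \eqref{hyper}; these are the same computation organized slightly differently.
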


\begin{proof}
Enumerating the unreduced Farey fractions 
in order of fixed $k$, as 
$\frac{j}{k}$ with $1 \le  j \le k \le n$,
we have $\G_n = \frac{D_n^{\ast}}{N_n^{\ast}}$,
in which   $D_n^{\ast} = \prod_{k=1}^n  k^k$  and 
$N_n^{\ast} = \prod_{j=1}^n j! = \prod_{j=1}^n j^{n-j+1}$. Therefore,
setting $0!=1$, we have 

\begin{equation}\label{Gndef}
\G_n=\frac{1^1.2^2.3^3\dots n^n}{1^n.2^{n-1}.3^{n-2}\cdots n}=
\frac{\left(\frac{n!}{0!}.\frac{n!}{1!}\dots\frac{n!}{(n-1)!}\right)}{1!\,2!\cdots(n-1)!\,n!}\nonumber \\
=\prod_{t=1}^{n}\frac{n!}{t!(n-t)!}=\prod_{t=1}^{n}\binom{n}{t}.\\
\end{equation}
The last product also equals   $\prod_{t=0}^n \binom{n}{t}$, as required. 
\end{proof}


\begin{rem}
Products of binomial coefficients $\G_n$ appear as normalizing constants $c_{n+1}$ 
associated to  the  density $z \mapsto \frac{n+1}{\pi} (1 + |z|^2)^{-n}$ on $\mathbb{C}$, 
see Lyons \cite[Sec.3.8]{Lyons14}. This density is associated with 
a particular Gaussian orthogonal polynomial ensemble, the {\em$(n+1)$-st spherical ensemble},
which is the (randomly ordered) set of eigenvalues of $M_1^{-1} M_2$ where $M_i$ are independent $(n+1)\times (n+1)$ matrices
whose entries are independent standard complex Gaussians. 
This eigenvalue interpretation of the spherical ensemble is due  to Krishnapur \cite{Kris06}, see \cite{HKPV09}.
\end{rem}

\begin{rem}
The  reciprocal $\F_n= 1/F_n$ of the product $F_n$ of all nonzero Farey fractions  of order $n$
is a quantity analogous to $\G_n$. It encodes interesting arithmetic information, but 
is usually not an integer.  The Riemann hypothesis is encoded in its asymptotic behavior,
as discussed in Remark \ref{rem33} below.
The quantities $\F_n$ and $\G_n$ are related by the identity 
$\G_n = \prod_{k=1}^n \F_{\lfloor n/k\rfloor}$, which under a form of M\"{o}bius inversion
yields  $\F_n = \prod_{k=1}^n (\G_{\lfloor n/k \rfloor})^{\mu(k)}.$  Our study of $\G_n$ was motivated 
 in part  for its potential  to obtain useful information about $\F_n$.
\end{rem}

%
%
\section{Growth of $\G_n$}\label{sec2b}

We  estimate  the growth of $\G_n$ using 
 its connection to superfactorials $N_n^{\ast}$ and hyperfactorials $D_n^{\ast}$.
One can derive  a complete asymptotic expansion  for each of $\log(N_n^{\ast})$, $\log(D_n^{\ast})$ and 
 $\log(\G_n)$, using the Barnes G-function, which we present in Appendix A. 
 Here we derive the  first  few leading  terms,  for which Stirling's formula suffices,
 and which permit

 \begin{thm}\label{th22b}
For $n \ge 2$ the  superfactorials $N_n^{\ast}$ and hyperfactorials $D_n^{\ast}$ satisfy
\begin{eqnarray*}
\log  (D_n^{\ast}) &= & \frac{1}{2} n^2 \log n - \frac{1}{4}n^2  + \frac{1}{2} n \log n +\quad\quad\quad \quad\quad\quad\quad + \, O( \log n).\\
~&~ \nonumber\\
\log  (N_n^{\ast}) &=& \frac{1}{2} n^2 \log n - \frac{3}{4}n^2 + n \log n + \left(\frac{1}{2}\log({2 \pi})-1\right)  n +  O( \log n), 
\end{eqnarray*}
\end{thm}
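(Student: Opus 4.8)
The plan is to derive both expansions from a single Euler--Maclaurin computation for $f(x) = x\log x$, supplemented by Stirling's formula for $\log(n!)$; since the statement only demands an error term $O(\log n)$, the leading Euler--Maclaurin terms together with one boundary correction will be more than enough, and the finer structure is deferred to Appendix A.

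First, for the hyperfactorial I would write $\log(D_n^{\ast}) = \sum_{k=1}^{n} k\log k$ and apply Euler--Maclaurin to $f(x) = x\log x$ on $[1,n]$. The integral term contributes $\int_1^n x\log x\,dx = \frac12 n^2\log n - \frac14 n^2 + \frac14$; the boundary term $\frac12\bigl(f(1)+f(n)\bigr)$ contributes $\frac12 n\log n$, since $f(1)=0$; the first correction term $\frac{B_2}{2!}\bigl(f'(n)-f'(1)\bigr) = \frac{1}{12}\log n$ is $O(\log n)$; and the Euler--Maclaurin remainder is $O(\log n)$ as well (in fact $O(1)$ if one more term is carried, since $f^{(4)}(x) = 2x^{-3}$ is absolutely integrable on $[1,\infty)$, which is the entry point for the complete expansion of Appendix A). Collecting these terms gives the stated formula for $\log(D_n^{\ast})$.

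For the superfactorial I would avoid a second summation by counting, in $\log(N_n^{\ast}) = \sum_{j=1}^{n}\log(j!)$, how many times each $\log i$ appears, which yields the elementary identity
\begin{equation*}
\log(N_n^{\ast}) = \sum_{i=1}^{n} (n+1-i)\log i = (n+1)\log(n!) - \sum_{i=1}^{n} i\log i = (n+1)\log(n!) - \log(D_n^{\ast}).
\end{equation*}
Now substitute Stirling's formula $\log(n!) = n\log n - n + \frac12\log(2\pi n) + O(1/n)$ together with the expansion for $\log(D_n^{\ast})$ just obtained. Expanding $(n+1)\log(n!)$ produces $n^2\log n + n\log n + \frac12 n\log n - n^2 - n + \frac12 n\log(2\pi) + O(\log n)$; subtracting $\log(D_n^{\ast})$ removes half the $n^2\log n$ term, leaves the $n\log n$ term with coefficient $1$ (after the two $\frac12 n\log n$ contributions cancel), changes the $n^2$ coefficient from $-1$ to $-\frac34$, and leaves the linear coefficient equal to $\frac12\log(2\pi) - 1$, which is exactly the claimed formula for $\log(N_n^{\ast})$.

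There is no substantive obstacle here: the work is entirely bookkeeping, and the only points needing care are tracking the constant $\frac14$ and the $\frac{1}{12}\log n$ term so they are safely absorbed into $O(\log n)$, using the form of Stirling's remainder that makes $\sum_{k\le n} O(1/k) = O(\log n)$ if one instead sums Stirling termwise, and confirming that the $n\log n$ and $n$ coefficients emerge with the stated values $1$ and $\frac12\log(2\pi)-1$ after the cancellations in $(n+1)\log(n!) - \log(D_n^{\ast})$. A fully parallel alternative, not using the identity above, is to apply Euler--Maclaurin directly to $\sum_{k=1}^{n}\log(k!)$ after inserting Stirling's formula for each $\log(k!)$; this gives the same answer and could be substituted if a more self-contained derivation is preferred.
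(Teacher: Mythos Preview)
Your proposal is correct and the bookkeeping checks out. The paper takes a somewhat different route: it avoids Euler--Maclaurin entirely and handles $\sum_{k=1}^n k\log k$ by rewriting it as $\sum_{j=1}^n\bigl(\log(n!) - \log((j-1)!)\bigr)$, applying Stirling to each $\log((j-1)!)$, and observing that $\sum_{j=1}^{n-1} j\log j$ reappears on the right-hand side, which yields a linear equation for the sum that can be solved directly. For $\log(N_n^{\ast})$ the paper then sums Stirling's formula for $\log(k!)$ termwise over $k$ and plugs in the result for $\sum k\log k$ just obtained---this is essentially the alternative you sketch in your final sentence. Your Euler--Maclaurin computation for $\log(D_n^{\ast})$ is the more direct and systematic choice and connects naturally to the complete expansion of Appendix~A; the paper's Stirling-only trick has the modest virtue of needing a single analytic input rather than two. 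Your derivation of $\log(N_n^{\ast})$ via the identity $\log(N_n^{\ast}) = (n+1)\log(n!) - \log(D_n^{\ast})$ is cleaner than the paper's termwise summation, and is in fact the same relation the paper records as $D_n^{\ast} = (n!)^n/N_{n-1}^{\ast}$, just used in the reverse direction.
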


\begin{proof}
We will apply  Stirling's formula in the truncated form
$$
\log (n!)=  n\log n -n + \frac{1}{2} \log n + \frac{1}{2} \log (2 \pi) + O (\frac{1}{n}),
$$
valid for all $n \ge 1$.

For the denominator term, we have 
$$
\log(D_n^{\ast}) = \sum_{k=1}^n k\log k =
 \sum_{j=1}^n \left(\sum_{k=j}^n \log k \right)= \sum_{j=1}^n \left( \log (n!) - \log (j-1)!\right).
$$
Applying Stirling's formula on the right side (and shifting $j$ by $1$) yields
$$
\sum_{k=1}^n k \log k= n \log (n!) - \sum_{j=1}^{n-1} \left(j \log j -j + \frac{1}{2} \log j + \frac{1}{2}\log (2\pi) + O(\frac{1}{j}) \right),
$$
We move the term $\sum_{j=1}^{n-1} j \log j$ to the left side and obtain
$$
2 \left(\sum_{k=1}^n k \log k \right)= n \log (n!) + n \log n + \frac{n(n-1)}{2} - \frac{1}{2} \log (n!) - \left(\frac{1}{2} \log (2\pi) \right)n + O( \log n).
$$
Applying Stirling's formula again on the right and simplifying yields
\begin{equation}\label{sumklogk}
\sum_{k=1}^n k \,\log k = \,\frac{1}{2}n^2 \log n - \frac{1}{4} n^2 + \frac{1}{2} n \log n  + O(\log n),
\end{equation}
which gives the asymptotic formula for $\log (D_n^{\ast})$ above.

For the numerator term we have
\begin{eqnarray*}
\log(N_n^{\ast}) & = & \sum_{k=1}^n \log (k!)  = \sum_{k=1}^n \Big(k \log k - k + \frac{1}{2} \log k + \frac{1}{2} \log (2 \pi) + O (\frac{1}{k})\Big) \\
 &=& \frac{1}{2} n^2 \log n -\frac{3}{4} n^2 + \frac{1}{2}n \log n  + \frac{1}{2} (\log (2 \pi)- 1) n + O (\log n).
\end{eqnarray*}
The second line used $\sum_{k=1}^n \, \log k  = \log (n!)$ with Stirling's formula,
plus the asymptotic formula \eqref{sumklogk}.
\end{proof}

We single out for special emphasis the 
initial terms in the asymptotic expansion for $\log(\G_n)$.

 \begin{thm}\label{th32b} 
The  function $\W_{\infty}(\G_n) :=\log(\G_n)$ satisfies for $n \ge 2$ the estimate
\begin{equation}\label{main-asymp1b}
\log  (\G_n)  =   \frac{1}{2}n^2-\frac{1}{2}n\log n+\left(1- \frac{1}{2} \log (2 \pi)\right) n + O(\log n).
\end{equation}
Here $1- \frac{1}{2} \log (2 \pi) \approx 0.08106$.
\end{thm}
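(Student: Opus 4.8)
The plan is to deduce \eqref{main-asymp1b} directly from the two results already in hand. By Theorem \ref{th21} (more precisely, by the identity $\G_n = D_n^{\ast}/N_n^{\ast}$ established in its proof) we have
\begin{equation*}
\log(\G_n) = \log(D_n^{\ast}) - \log(N_n^{\ast}),
\end{equation*}
so it suffices to subtract the two asymptotic expansions recorded in Theorem \ref{th22b} and collect terms.

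Carrying out the subtraction term by term: the leading contributions $\frac{1}{2} n^2 \log n$ from $\log(D_n^{\ast})$ and $\log(N_n^{\ast})$ cancel exactly; the $n^2$ contributions give $-\frac{1}{4} n^2 - \bigl(-\frac{3}{4} n^2\bigr) = \frac{1}{2} n^2$; the $n\log n$ contributions give $\frac{1}{2} n\log n - n\log n = -\frac{1}{2} n\log n$; the linear contributions give $0 - \bigl(\frac{1}{2}\log(2\pi) - 1\bigr) n = \bigl(1 - \frac{1}{2}\log(2\pi)\bigr) n$; and the $O(\log n)$ error terms combine to $O(\log n)$. This is precisely \eqref{main-asymp1b}, and the numerical value $1 - \frac{1}{2}\log(2\pi) \approx 0.08106$ follows at once.

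I do not expect any genuine obstacle here, since every ingredient is already established; the only point requiring (minor) care is tracking the lower-order $n$ and $O(\log n)$ terms correctly through the subtraction. This is also why it is cleaner to route through the superfactorial and hyperfactorial asymptotics than to work directly from the binomial product \eqref{fullprod}, writing $\log(\G_n) = \sum_{j=0}^{n}\log\binom{n}{j}$ and applying Stirling's formula to each factor: the latter approach would reproduce, in a less organized form, exactly the sum estimates such as \eqref{sumklogk} already carried out in the proof of Theorem \ref{th22b}. If one wanted the complete asymptotic expansion in negative powers of $n$ rather than just these first three terms plus error, one would instead invoke the Barnes $G$-function asymptotics, as the authors indicate is done in Appendix A.
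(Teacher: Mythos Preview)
Your proof is correct and follows exactly the paper's approach: the paper's own proof is the single sentence that $\log(\G_n)=\log(D_n^{\ast})-\log(N_n^{\ast})$ follows immediately from Theorem~\ref{th22b}, and you have simply made the term-by-term subtraction explicit.
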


\begin{proof}
The  asymptotic formula for $\log (\G_n) = \log (D_n^{\ast}) - \log(N_n^{\ast})$  follows immediately
from Theorem \ref{th22b}. 
\end{proof}

\begin{rem}\label{rem32}
This expansion  captures
a connection to density of primes and 
has a further analogy with the Riemann hypothesis, given the next remark.
It shows that the  function $\log(\G_n)$  is asymptotic   
to  $\frac{1}{2} n^2$, which is  smaller 
by a  logarithmic factor than  either of $\log(D_n^{\ast})$ or $\log(N_n^{\ast})$ separately.
That is, the top terms in the asymptotic expansions of $\log(D_n^{\ast})$ or $\log(N_n^{\ast})$ cancel.
 This savings of a  logarithmic factor in the main term of the asymptotic formula is directly
related to primes having density $O( \frac{n}{\log n})$, and to  obtaining  Chebyshev-type
bounds for $\pi(x)$, see Section \ref{sec7b}.
\end{rem}

\begin{rem}\label{rem33}

The  analogy of  the 
asymptotic formula \eqref{main-asymp1b}
 with the Riemann hypothesis 
arises from  its interpretation in terms of products of unreduced Farey fractions
and concerns  its remainder term $O(\log n)$.
We can rewrite it  in terms
of the number $\Phi^{\ast}(n)= \binom{n+1}{2}$ of unreduced Farey fractions as 
\begin{equation}\label{main-asymp2}
\log (\G_n)=\Phi^{\ast}(n)-\frac{1}{2} n\log n + \big(\frac{1}{2} -\frac{1}{2} \log (2 \pi)) \big) n + O ( \log n),
\end{equation}
with $\frac{1}{2} - \frac{1}{2} \log (2 \pi) \approx -041894$.
This expression is directly comparable with an expression  for the 
 logarithm of (inverse)  Farey products $\log(\F_n)$  having the form
\begin{equation}\label{Miko}
\log (\F_n) = \Phi(n) - \frac{1}{2} n + R(n),
\end{equation}
in which  $\Phi(n)$ counts
 the number of Farey fractions and $R(n)$  is a  remainder term
 defined by the equality \eqref{Miko}.
The function  $\Phi(n) = \sum_{k=1}^n \phi(k)$ is  the summatory function
for the Euler $\phi$-function, and satisfies $\Phi(n) \sim \frac{3}{\pi^2} n^2$
as $n \to \infty$.
 In 1951  Mikol\'{a}s \cite[Theorem 1]{Mik51},
showed   that the Riemann hypothesis is equivalent to the assertion that 
the remainder term is small, satisfying 
$$ 
R(n) = O( n^{\frac{1}{2} + \epsilon})
$$
 for each $\epsilon >0$ for $n \ge 2$. In fact he showed that
 estimates of form $R(n) =O( x^{\theta+ \epsilon})$ for fixed $1/2 \le \theta <1$
and for all $\epsilon >0$  were
equivalent to a zero-free region of the Riemann zeta function for
$Re(s)> \theta$.
We can therefore view  \eqref{main-asymp2} 
by analogy as an ``unreduced Farey fraction Riemann hypothesis" 
in view of its small error term
 $O(\log n)$. 
A Riemann hypothesis type estimate would
 require only an error term of form $O( n^{1/2 + \epsilon})$.

See \cite[Section 3]{LM14r} for a further discussion of Mikolas's results,
which include unconditional error bounds for $R(n)$. The true subtlety in the Mikol\"{a}s
formula seems to resolve around oscillations in the function $\Phi(x)$  
of magnitude at least $\Omega (x \sqrt{\log\log x})$  which themselves are
related to zeta zeros.
\end{rem}

%
%
%
\section{Prime-power divisibility of $\G_n$: Formulas using integer parts}\label{sec3}

 We study the divisibility  of $\G_n$ by powers of a fixed prime.
We obtain three distinct formulas for $\ord_p(\G_n)$, in this section  and 
in the following two sections, respectively. 

The first formula simply encodes the Farey product decomposition.

\begin{thm}\label{th31a} 
 For $p$ a prime, the function $\W_p(G_n) :=\ord_p (\G_n)$ satisfies
\begin{equation}\label{ordp-big1}
\W_p(\G_n) = \ord_p(D_n^{\ast}) - \ord_p(N_n^{\ast}),
\end{equation}
where $D_n^{\ast} = \prod_{k=1}^n k^k$ and $N_{n}^{\ast} = \prod_{k=1}^{n} k!.$ 
\end{thm}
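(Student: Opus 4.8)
The plan is to read off the factorization directly from Theorem~\ref{th21}. By \eqref{Gndef} in the proof of Theorem~\ref{th21}, we have the exact identity of integers (or rather of positive rationals that happen to be integers) $\G_n = D_n^{\ast}/N_n^{\ast}$, where $D_n^{\ast} = \prod_{k=1}^n k^k$ is the hyperfactorial and $N_n^{\ast} = \prod_{k=1}^n k!$ is the superfactorial. Since $\ord_p$ is a valuation on $\QQ^{\times}$ — that is, $\ord_p(xy) = \ord_p(x) + \ord_p(y)$ and hence $\ord_p(x/y) = \ord_p(x) - \ord_p(y)$ for nonzero rationals $x,y$ — applying $\ord_p$ to the identity $\G_n = D_n^{\ast}/N_n^{\ast}$ immediately yields \eqref{ordp-big1}.

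Concretely, the steps are: (1) invoke Theorem~\ref{th21} (specifically the computation \eqref{Gndef}) to record $\G_n = D_n^{\ast}/N_n^{\ast}$ with $D_n^{\ast}, N_n^{\ast}$ as above; (2) note that $D_n^{\ast}$ and $N_n^{\ast}$ are positive integers, so each has a well-defined nonnegative $p$-adic order, and $\G_n$ is a positive integer by Theorem~\ref{th21}; (3) use additivity of $\ord_p$ under multiplication and division of nonzero rationals to conclude $\ord_p(\G_n) = \ord_p(D_n^{\ast}) - \ord_p(N_n^{\ast})$. One may optionally remark that this forces $\ord_p(D_n^{\ast}) \ge \ord_p(N_n^{\ast})$ for every prime $p$, reconfirming that $\G_n$ is an integer, though that is already part of Theorem~\ref{th21}.

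There is essentially no obstacle here: the statement is a one-line corollary of Theorem~\ref{th21} together with the multiplicativity of the $p$-adic valuation. The only thing worth being slightly careful about is that $N_n^{\ast} \mid D_n^{\ast}$ so that the quotient is genuinely an integer and the orders are all finite and nonnegative — but this is already established in Theorem~\ref{th21}, so it may simply be cited. If a self-contained argument is preferred, one can instead expand both $\ord_p(D_n^{\ast}) = \sum_{k=1}^n k\,\ord_p(k)$ and $\ord_p(N_n^{\ast}) = \sum_{k=1}^n \ord_p(k!)$ via Legendre's formula and match the difference term by term against $\ord_p\big(\prod_{t=1}^n \binom{n}{t}\big) = \sum_{t=1}^n \big(\ord_p(n!) - \ord_p(t!) - \ord_p((n-t)!)\big)$, but this merely retraces the algebra of \eqref{Gndef} inside $\ord_p$ and adds nothing. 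The clean route is the valuation-theoretic one.
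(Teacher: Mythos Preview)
Your proof is correct and follows exactly the paper's own argument: apply $\ord_p$ to the identity $\G_n = D_n^{\ast}/N_n^{\ast}$ from Theorem~\ref{th21} and use additivity of the valuation. The extra remarks about integrality and the alternative Legendre-formula expansion are fine but unnecessary, as the paper's proof is the same one-line deduction you give.
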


\begin{proof}
This formula follows directly 
by applying $\ord_p(\cdot)$
to both sides of the decomposition $\G_n= \frac{1}{G_n} = \frac{D_n^{\ast}}{N_n^{\ast}}.$
\end{proof}

The formula \eqref{ordp-big1} has several interesting features.
\begin{enumerate}
 \item[(i)]
This formula expresses $\ord_p(\G_n)$ 
as a difference of  two positive terms,
$$
\SP_{p,1}^{+}(n)  := \ord_p( D_n^{\ast})= \sum_{k=1}^n    \ord_p (k^k)
$$ 
and
$$
\SP_{p,1}^{-}(n) :=  \ord_p( N_{n}^{\ast})= \sum_{k=1}^n \ord_p (k!).
$$
Both terms are nondecreasing in $n$, that is,   
$$\Delta ( \SP_{p,1}^{\pm})(n):= \SP_{p,1}^{\pm}(n) - \SP_{p,1}^{\pm}(n-1)$$
 are  nonnegative functions.
Furthermore  the difference term $\Delta (\SP_{p,1}^{-})(n)$ is nondecreasing in $n$.

\item[(ii)] There is  a race in size between the terms $\SP_{p,1}^{+}(n)$ and $\SP_{p,1}^{-}(n)$,
as $n$ varies. 
The first term $\SP_{p,1}^{+}(n)$ jumps only when $p$ divides $n$ and makes 
large jumps at these values. In contrast, the  second term $\SP_{p,1}^{-}(n)$ changes  in smaller nonzero 
increments, making a positive contribution whenever $p \nmid n$ and $n > p$.  In consequence: For $n \ge p$, { $\ord_p (\G_n)$ 
 increases going from $n-1$ to $n$
when $p \mid n$, and  strictly decreases when $p \nmid n$.}
\end{enumerate}

We may re-express the terms in formula \eqref{ordp-big1} using the floor function (greatest integer part function).
 We  start with {\em de Polignac's formula} (attributed to Legendre by Dickson \cite[p. 263]{Dick19}), which states that 
$$
\ord_{p}(n!) = \sum_{j=1}^{\infty} \Big\lfloor \frac{n}{p^j} \Big\rfloor\, .
$$
The sum on the right  is always finite, with largest nonzero term $j=\lfloor \log_p n \rfloor$, with $p^j \le n < p^{j+1}$.
We obtain
$$
\ord_p(N_n^{\ast} )= \sum_{k=1}^n \Big(\sum_{j=1}^{\infty} \Big\lfloor \frac{k}{p^j} \Big\rfloor\Big)
$$
and, using \eqref{hyper},
$$
\ord_p(D_n^{\ast}) = n \left( \sum_{j=1}^{\infty} \Big\lfloor \frac{n}{p^j} \Big\rfloor \right) - \ord_p( N_{n-1}^{\ast}).
$$

We next obtain asymptotic estimates with error term for  $\ord_p(N_n^{\ast})$ and $\ord_p(D_n^{\ast})$,
and use these estimates to upper bound the size of $\ord_p(\G_n)$.

\begin{thm}\label{th32a} 
 For $p$ a prime, and all $n \ge 2$, 
$$
\ord_p(N_n^{\ast})= \frac{1}{2(p-1)} n^2 +O\Big(n \log_p n\Big).
$$
and
$$
\ord_p(D_n^{\ast})= \frac{1}{2(p-1)} n^2 +O\Big(n \log_p n \Big).
$$
It follows that, 
$$
\ord_p(\G_n)= O \Big( n \log_p n \Big).
$$
for all $n \ge 1$. 
\end{thm}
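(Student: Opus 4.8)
The plan is to estimate each of $\ord_p(N_n^\ast)$ and $\ord_p(D_n^\ast)$ directly from the floor-function formulas derived just above, obtaining the same main term $\frac{1}{2(p-1)}n^2$ for both, so that the difference $\ord_p(\G_n)$ is controlled entirely by the error terms. First I would handle $\ord_p(N_n^\ast) = \sum_{k=1}^n \ord_p(k!) = \sum_{k=1}^n \sum_{j\ge 1}\lfloor k/p^j\rfloor$. The inner sum over $j$ has at most $\lfloor \log_p n\rfloor + 1 = O(\log_p n)$ nonzero terms, and writing $\lfloor k/p^j\rfloor = k/p^j + O(1)$ gives $\ord_p(k!) = \frac{k}{p-1} + O(\log_p n)$ uniformly for $1 \le k \le n$, using $\sum_{j\ge 1} p^{-j} = \frac{1}{p-1}$. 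Summing over $k$ from $1$ to $n$ then yields $\ord_p(N_n^\ast) = \frac{1}{p-1}\cdot\frac{n(n+1)}{2} + O(n\log_p n) = \frac{1}{2(p-1)}n^2 + O(n\log_p n)$, which is the first claimed estimate.

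Next I would treat $\ord_p(D_n^\ast)$ via the identity $\ord_p(D_n^\ast) = n\bigl(\sum_{j\ge 1}\lfloor n/p^j\rfloor\bigr) - \ord_p(N_{n-1}^\ast)$ recorded above (coming from \eqref{hyper}). The first term is $n\cdot\ord_p(n!) = n\bigl(\frac{n}{p-1} + O(\log_p n)\bigr) = \frac{n^2}{p-1} + O(n\log_p n)$, and subtracting the already-established estimate $\ord_p(N_{n-1}^\ast) = \frac{1}{2(p-1)}n^2 + O(n\log_p n)$ gives $\ord_p(D_n^\ast) = \frac{n^2}{p-1} - \frac{n^2}{2(p-1)} + O(n\log_p n) = \frac{1}{2(p-1)}n^2 + O(n\log_p n)$, as claimed. (Alternatively one can bound $\ord_p(D_n^\ast) = \sum_{k=1}^n \ord_p(k^k) = \sum_{k=1}^n k\,\ord_p(k)$ directly, but the superfactorial route reuses work and is cleaner.)

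Finally, by Theorem \ref{th31a}, $\ord_p(\G_n) = \ord_p(D_n^\ast) - \ord_p(N_n^\ast)$; the two $\frac{1}{2(p-1)}n^2$ main terms cancel exactly, leaving $\ord_p(\G_n) = O(n\log_p n)$. For $n \ge 2$ this is immediate from the two displayed estimates; for $n=1$ one checks $\G_1 = 1$ so $\ord_p(\G_1) = 0$, covering the stated range $n \ge 1$. I do not anticipate a serious obstacle here — the only point requiring a little care is making the $O(\cdot)$ constants genuinely uniform in $k \le n$ (not just in $n$) when passing from $\lfloor k/p^j\rfloor$ to $k/p^j$, and keeping track that the number of relevant $j$'s is governed by $n$ rather than $k$; once that uniformity is in place the summation over $k$ is routine. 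The conceptual content is simply the cancellation of leading terms already flagged in Remark \ref{rem32}, now made quantitative prime by prime.
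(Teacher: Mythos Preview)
Your proposal is correct and follows essentially the same approach as the paper: obtain $\ord_p(k!) = \frac{k}{p-1} + O(\log_p n)$ from de Polignac's formula (the paper phrases this via fractional parts rather than $\lfloor k/p^j\rfloor = k/p^j + O(1)$, but the content is identical), sum over $k$ for $N_n^\ast$, invoke the hyperfactorial identity for $D_n^\ast$, and subtract. The only cosmetic difference is that the paper explicitly notes the tail $j > \log_p n$ sums geometrically to $O(1)$, which is the point you flag as ``requiring a little care'' at the end.
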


\begin{proof}
We rewrite de Polignac's formula using the identity
 $\frac{n}{p^j} = \lfloor \frac{n}{p^j} \rfloor + \{ \frac{n}{p^j} \}$, with the
fractional part function $\{ x\} := x - \lfloor x\rfloor$,  to obtain
\begin{equation} \label{mod-de-pol}
\ord_p(n!) = \frac{n}{p-1} - \sum_{j=1}^{\infty} \{ \frac{n}{p^j} \}.
\end{equation}
For $j > \log_p n$ we have $\{ \frac{n}{p^j} \}= \frac{n}{p^j}$ 
so the series becomes a geometric series  past this point
and can be summed.  One obtains the estimate
$$
\ord_p (n!) = \frac{n}{p-1} + O ( 1+\log_p n),
$$
with an $O$-constant independent of $p$.

For $N_n^{\ast}$ we have
$\ord_p(N_n^{\ast})= \sum_{k=1}^N \ord_p (k!),$
and applying \eqref{mod-de-pol} yields
\begin{eqnarray*}
\ord_p(N_n^{\ast}) &= &\sum_{k=1}^n \frac{k}{p-1} - \sum_{k=1}^n  \Big(\sum_{j=1}^{\infty} \{ \frac{k}{p^j} \}\Big) \\
&=& \frac{n^2+n}{2(p-1)}  + O \Big ( n(1+\log_p n) \Big)
\end{eqnarray*}
The result follows by shifting $\frac{1}{2(p-1)}n$ to the remainder term.

For $D_n^{\ast}$ we have, using \eqref{hyper}, that 
\begin{equation}\begin{aligned}
\ord_p(D_n^{\ast}) &=  n \ord_p( n!) - \ord_p(N_{n-1}^{\ast})\\
&= \left( n \left(\frac{n}{p-1}\right) + O( 1+\log_p n) \right)\\
&- \left(\frac{1}{2(p-1)} (n-1)^2 +O\Big(n(1+ \log_p n)\right)\\
&= \frac{1}{2(p-1)} n^2 +O\Big(n\log_p n\Big).
\end{aligned}\end{equation}

For $\G_n^{\ast}$ the result follows from $\ord_p(\G_n) = \ord_p(D_n^{\ast}) - \ord_p(N_n^{\ast}).$
\end{proof}

The bound $\ord_p(\G_n)= O (n \log_p n)$, valid for all $n \ge p$,
  quantifies the smaller size of
   in  size of $\log (\G_n)$ compared to either $\log(N_n^{\ast})$ and $\log(D_n^{\ast})`$.
 In the situation here  the smaller size is by almost a square root factor.
The bound on $\ord_p(\G_n)$  above is the correct order of magnitude,
and we obtain a sharp constant in Theorem \ref{th29} below. 


%
%
%

\section{Prime-power divisibility of  $\G_n$: Formulas using base $p$ digit sums}\label{sec4}

We obtain  a second formula for $\ord_p(\G_n)$, one expressed  directly in terms of base $p$ digit sums,  
and draw consequences. We start from
\begin{equation} \label{eq301}
\ord_p(\G_n) = \sum_{k=0}^n \ord_p \binom{n}{k}.
\end{equation}
The divisibility of binomial coefficients by prime powers $p^k$ has been
studied for over $150$ years, see  the extensive survey of  Granville \cite{Gra97}.
Divisibility properties  are well known to be related to  the coefficients $a_j$ of the the base $p$ radix expansion of $n$,
written as
$$
n = \sum_{j=0}^k  a_j p^j, \quad  0 \le a_j \le p-1,
$$
with $k = \lfloor \log_p n\rfloor.$

%
%
%

\subsection{Prime-power divisibility of $\G_n$: digit summation form}\label{sec33}

We derive a formula for $\ord_p(\G_n)$ that expresses it 
in terms of summatory functions of base $p$ digit sums.  

We will consider  digit sums  more generally  
 for radix expansions to an  arbitrary integer base $b \ge 2$.
Write a positive integer $n$ in  base $b \ge 2$ as
$$
n := \sum_{i=0}^{k} a_i b^i, \, \mbox{for} \,\, \,  b^k \le n < b^{k+1}.
$$
with digits $0 \le a_i \le b-1$. Here $k = \lfloor \log_b n\rfloor.$
and $a_i := a_i(n)$ with $a_k(n)\geq1$.

\begin{enumerate}
\item
The {\em sum of digits function $\dgt_b(n)$ (to base $b$)} of $n$  is
\begin{equation}\label{sum-dig}
\dgt_b(n) := \sum_{i=0}^k a_i(n),
\end{equation}
with  $k = \lfloor \log_b n\rfloor.$\smallskip

\item
 The {\em running digit sum  function $\Sum_b(n)$  (to base $b$)}  is
\begin{equation}\label{tot-sum-dig}
\Sum_b(n) := \sum_{j=0}^{n-1} \dgt_b(j).
\end{equation}
\end{enumerate}

Our  second formula
for $\ord_p(\G_n)$ is given in terms of these quantities;
we defer its proof to the end of this subsection.


\begin{thm}\label{th39} 
Let the prime $p$ be fixed. Then for all $n \ge 1$, 
\begin{equation}\label{summatory}
\W_p(\G_n) :=\ord_p (\G_n) = \frac{1}{p-1} \Big(2\Sum_p(n) - (n-1) \dgt_p(n)   \Big).
\end{equation}
\end{thm}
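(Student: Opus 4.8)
The plan is to start from the Farey-product formula of Theorem~\ref{th31a}, namely $\ord_p(\G_n) = \ord_p(D_n^{\ast}) - \ord_p(N_n^{\ast})$, and to evaluate each of the two terms in closed form using Legendre's (de~Polignac's) formula together with the classical identity $\ord_p(m!) = \frac{m - \dgt_p(m)}{p-1}$, which is the exact form of \eqref{mod-de-pol} once one recognizes $\sum_{j\ge 1}\{m/p^j\} = \frac{\dgt_p(m)}{p-1} \cdot \frac{1}{1}$—more precisely $m - (p-1)\sum_{j\ge1}\lfloor m/p^j\rfloor = \dgt_p(m)$. So the first step is to record the exact identity $\ord_p(m!) = \frac{m-\dgt_p(m)}{p-1}$ for every $m\ge 1$.

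Next I would compute $\ord_p(N_n^{\ast}) = \sum_{k=1}^n \ord_p(k!) = \frac{1}{p-1}\sum_{k=1}^n \big(k - \dgt_p(k)\big) = \frac{1}{p-1}\Big(\frac{n(n+1)}{2} - \sum_{k=1}^n \dgt_p(k)\Big)$. Since $\dgt_p(0)=0$, the digit sum $\sum_{k=1}^n \dgt_p(k)$ equals $\Sum_p(n+1) = \Sum_p(n) + \dgt_p(n)$ by the definition \eqref{tot-sum-dig}; I will keep careful track of this off-by-one, since it is exactly the kind of place an error creeps in. For $\ord_p(D_n^{\ast})$ I would use the identity \eqref{hyper}, $D_n^{\ast} = (n!)^n / N_{n-1}^{\ast}$, giving $\ord_p(D_n^{\ast}) = n\,\ord_p(n!) - \ord_p(N_{n-1}^{\ast}) = \frac{n(n-\dgt_p(n))}{p-1} - \frac{1}{p-1}\Big(\frac{(n-1)n}{2} - \Sum_p(n)\Big)$, where now $\sum_{k=1}^{n-1}\dgt_p(k) = \Sum_p(n)$ exactly.

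Then I would subtract: $\ord_p(\G_n) = \ord_p(D_n^{\ast}) - \ord_p(N_n^{\ast})$. The quadratic-in-$n$ pieces should cancel—this is the arithmetic analogue of the logarithmic-factor cancellation noted in Remark~\ref{rem32}—leaving, after collecting the $\Sum_p$ and $\dgt_p(n)$ terms, $\frac{1}{p-1}\big(2\Sum_p(n) - (n-1)\dgt_p(n)\big)$. Concretely the $D^{\ast}$ term contributes $\frac{1}{p-1}(n^2 - n\dgt_p(n) - \tfrac{n^2-n}{2} + \Sum_p(n))$ and the $N^{\ast}$ term contributes $\frac{1}{p-1}(\tfrac{n^2+n}{2} - \Sum_p(n) - \dgt_p(n))$; subtracting gives $\frac{1}{p-1}(2\Sum_p(n) + (1-n)\dgt_p(n))$, as claimed.

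I expect no deep obstacle here—the result is an exact bookkeeping identity—so the "hard part" is purely clerical: getting the two boundary conventions right ($\Sum_p$ runs over $0 \le j \le n-1$, so $\sum_{k=1}^{n}\dgt_p(k) = \Sum_p(n)+\dgt_p(n)$ while $\sum_{k=1}^{n-1}\dgt_p(k)=\Sum_p(n)$), and verifying the $n^2$ terms genuinely cancel so that the final expression is the stated linear-plus-bilinear-in-digits quantity. A sanity check against the tabulated values (e.g. $\G_4 = 2^5\cdot 3$, so $\ord_2(\G_4)$ should come out $5$ from $\frac{1}{1}(2\Sum_2(4) - 3\,\dgt_2(4))$ with $\Sum_2(4) = d_2(0)+d_2(1)+d_2(2)+d_2(3) = 0+1+1+2 = 4$ and $\dgt_2(4)=1$, giving $8-3=5$) confirms the constants, and I would include one such check.
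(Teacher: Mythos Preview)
Your proof is correct. The paper, however, takes a different and somewhat shorter route: it first establishes Proposition~\ref{prop310}, the per-binomial identity
\[
\ord_p\binom{n}{t}=\frac{1}{p-1}\big(\dgt_p(t)+\dgt_p(n-t)-\dgt_p(n)\big),
\]
(which follows from the same Legendre identity $\ord_p(m!)=(m-\dgt_p(m))/(p-1)$ you use), and then simply sums over $t=0,\dots,n$. The symmetry $t\leftrightarrow n-t$ immediately gives $2\sum_{t=0}^n \dgt_p(t)-(n+1)\dgt_p(n)$, and rewriting $\sum_{t=0}^n\dgt_p(t)=\Sum_p(n)+\dgt_p(n)$ yields the result in one line. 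Your approach instead goes through the Farey decomposition $\G_n=D_n^{\ast}/N_n^{\ast}$ of Theorem~\ref{th31a} and the hyperfactorial identity~\eqref{hyper}, computing $\ord_p(D_n^{\ast})$ and $\ord_p(N_n^{\ast})$ separately before subtracting. This is slightly more bookkeeping (the identity~\eqref{hyper} and two off-by-one conventions instead of one), but it has the merit of tying the digit-sum formula directly back to the Section~\ref{sec3} framework, making explicit how the $n^2/(2(p-1))$ main terms of Theorem~\ref{th32a} cancel exactly rather than only up to $O(n\log_p n)$. Either argument is fine; the paper's is marginally cleaner for a standalone proof, while yours better exhibits the relationship between the first and second formulas for $\ord_p(\G_n)$.
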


  The  formula \eqref{summatory} has several  interesting features.
  \begin{enumerate} 
\item[(i)]
This formula  expresses $\ord_p(\G_n)$
 as a difference   of two positive terms,
\begin{equation}\label{sp2-plus}
\SP_{p, 2}^{+}(n) := \frac{2}{p-1} \Sum_p(n)
\end{equation}
and 
\begin{equation}\label{sp2-minus}
\SP_{p, 2}^{-}(n) := \frac{n-1}{p-1} \dgt_p(n).
\end{equation}
 The two functions,
 $\Sum_p(n)$ and $\dgt_p(n)$ have been extensively
 studied in the literature. They exhibit
 very different behaviors as $n$ varies:  $\Sum_p(n)$  grows rather smoothly  while 
  $\dgt_p(n)$ exhibits large  abrupt variations in size.
 \item[(ii)] 
 The function  $\Sum_p(n)$ has smooth variation and obeys the
asymptotic estimate   
$$\Sum_p(n) = (\frac{p-1}{2})n \log_p n + O(n),$$
see Theorem \ref{th313}.
In consequence $\SP_{p,2}^{+}(n) = n \log_p n +O(n)$. Furthermore $\Sum_p(n)$
can itself be written as a difference of two positive functions using the identity
\eqref{279} below, and noting the second term is nonpositive, by  Theorem \ref{th48a} (1).
\item[(iii)]
 The function  $\dgt_p(n)$ is known to have  average size 
 $\frac{p-1}{2}  \log_p (n)$ but is oscillatory.  For most $n$  it is  rather close to its average size, 
however it varies  from $1$ to a value 
as large as $ (p-1)\log_p n$ infinitely often as $n \to \infty$,
as given by the distribution of $\dgt_p(n)$ as $n$ varies. If one takes $n=p^k$ and
samples $m$ uniformly on  the range $[1, p^k]$, then $\dgt_p(m)$ 
it is a sum of  $k$ identically distributed independent random variables,
and as $k \to \infty$ will obey a central limit theorem. One can show that
it has size sharply concentrated around $ (\frac{p-1}{2})\log_p n$ with a spread
on the order of $C_p \sqrt{k}(\frac{p-1}{2})$.
In consequence, the second term $\SP_{p, 2}^{-}(n) = \frac{n+1}{p-1}\dgt_p(n)$
is positive and  has  average size  $\frac{1}{2}  n \log_p n + O(n)$,
which is in magnitude  half that of the first term. It has large variations in size,
between being twice its average size and being $o(n \log n)$.
\item[(iv)]
The function $\dgt_p(n)$ is highly correlated between successive values of $n$.
It  exhibits an "odometer" behavior where it has increases
by one at most steps, but has jumps downward of size 
about $p^k$ at values of $n$ that $p^k$ exactly divides. 
\end{enumerate}

To derive Theorem \ref{th39}, we  make use of the following elegant formula for 
 $\ord_p \binom{n}{t} $ noted by Granville \cite[25, equation following (18) ]{Gra97}.

\begin{prop}\label{prop310} 
For $n \ge 1$ and $0 \le t \le n$, 
\begin{equation}\label{ord-binom}
\ord_p \binom{n}{t}  =\frac{1}{p-1}\big( \dgt_p(t)+\dgt_p(n-t)-\dgt_p(n)\big).
\end{equation}
\end{prop}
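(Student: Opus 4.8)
The plan is to reduce everything to \emph{Legendre's formula} for the $p$-adic valuation of a factorial, namely
\[
\ord_p(m!) = \frac{m - \dgt_p(m)}{p-1} \qquad (m \ge 0),
\]
and then to apply it three times. First I would derive this identity from de Polignac's formula $\ord_p(m!) = \sum_{j \ge 1} \lfloor m/p^j \rfloor$ recalled in Section~\ref{sec3}. Writing the base $p$ expansion $m = \sum_{i=0}^{k} a_i p^i$, one has $\lfloor m/p^j \rfloor = \sum_{i \ge j} a_i p^{i-j}$, so interchanging the order of summation gives
\[
\sum_{j=1}^{\infty} \Big\lfloor \frac{m}{p^j} \Big\rfloor
= \sum_{i=1}^{k} a_i \,(1 + p + \cdots + p^{i-1})
= \sum_{i=1}^{k} a_i \,\frac{p^i - 1}{p-1}
= \frac{1}{p-1}\Big( \sum_{i=0}^k a_i p^i - \sum_{i=0}^k a_i \Big)
= \frac{m - \dgt_p(m)}{p-1},
\]
where the term $i=0$ is harmlessly included since $a_0\,\frac{p^0-1}{p-1}=0$. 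This establishes Legendre's formula.

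With this in hand the proposition is immediate. Since $\binom{n}{t} = \frac{n!}{t!\,(n-t)!}$, applying $\ord_p(\cdot)$ and Legendre's formula to each of the three factorials yields
\[
\ord_p\binom{n}{t} = \frac{\big(n - \dgt_p(n)\big) - \big(t - \dgt_p(t)\big) - \big((n-t) - \dgt_p(n-t)\big)}{p-1}.
\]
The three linear contributions cancel, $n - t - (n-t) = 0$, leaving exactly $\frac{1}{p-1}\big(\dgt_p(t) + \dgt_p(n-t) - \dgt_p(n)\big)$, as claimed. The boundary cases $t=0$ and $t=n$ cause no difficulty, since $\dgt_p(0)=0$ and $0!=1$ both have $\ord_p = 0$, consistent with $\binom{n}{0}=\binom{n}{n}=1$.

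There is essentially no serious obstacle here: the only point requiring care is the bookkeeping in the geometric-series manipulation together with the convention $\dgt_p(0)=0$. As an alternative route one could invoke Kummer's theorem, which identifies $\ord_p\binom{n}{t}$ with the number of carries occurring when $t$ is added to $n-t$ in base $p$, and then observe that each carry lowers the total digit sum by exactly $p-1$; but the factorial computation above is shorter and self-contained given de Polignac's formula, so that is the route I would take.
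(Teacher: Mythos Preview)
Your proof is correct and follows essentially the same approach as the paper: derive Legendre's formula $\ord_p(m!) = (m - \dgt_p(m))/(p-1)$ from de Polignac's formula, then apply it to each factorial in $\binom{n}{t} = n!/(t!\,(n-t)!)$. The only cosmetic difference is in the bookkeeping for Legendre's formula---the paper writes each $\lfloor n/p^j\rfloor$ as $(n - \text{lower-order digits})/p^j$ and collects geometric series in $n$ and in each $a_i$, whereas you interchange the order of summation directly---but the argument is the same.
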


\begin{proof}
Writing $n = \sum_{i=0}^k a_i p^i$, and applying de Polignac's formula, we have
\begin{eqnarray*}
\ord_p(n) &= &\frac{n- a_0}{p} + \frac{n- (a_1 p + a_0)}{p^2} + \cdots + \frac{n -(a_{k-1}p^{k-1} + \cdots + a_0)}{p^k}\\
&& ~~~~~+ \sum_{i=k+1}^{\infty} \frac{n -(a_k p^k + a_{k-1}p^{k-1} + \cdots + a_0)}{p^i},
\end{eqnarray*}
in which all  the terms in the last sum are identically zero. Collecting the terms for $n$ and for each $a_i$ separately
on the right side of this expression, 
each forms a geometric progression, yielding
\begin{equation}\label{277}
\ord_p(n!) =  \frac{1}{p-1}\big( n - (a_k+ a_{k-1} + \cdots + a_0) \big)= \frac{1}{p-1}\big( n -\dgt_p(n)\big).
\end{equation}
Writing the binomial coefficient $\binom{n}{t}= \frac{n!}{t! (n-t)!} $
 and substituting   \eqref{277} above
yields the desired formula. 
\end{proof}

\begin{proof}[Proof of Theorem \ref{th39}.]
Combining Theorem \ref{th21} with Proposition \ref{prop310} and noting that $\dgt_p(0)=0$, we have
\begin{eqnarray*}
\ord_p (\G_n) &= & \sum_{t=0}^n \ord_p \binom{n}{t} = \frac{1}{p-1} \sum_{t=0}^n ( \dgt_p(t)+\dgt_p(n-t)-\dgt_p(n))\\
&=& \frac{1}{p-1} \big(2 \sum_{t=0}^n \dgt_p(t)   - (n+1) \dgt_p(n) \big)\\
&=& \frac{1}{p-1} ( 2 S_p(n) - (n-1) \dgt_p(n)),
\end{eqnarray*} 
as required.
\end{proof}

%
%
%

\subsection{Analogue  function $\W_b(\G_n)$ for a general radix base $b$ }\label{sec52}

The  functions of digit sums on 
the right side of \eqref{summatory}  make sense for all radix bases $b \ge 2$,
which leads us to define general functions $\W_b(\G_n)$ for $b \ge 2$.

\begin{defn}\label{def53}
For each integer $b \ge 2$ and  $n \ge 1$,  the {\em generalized order}
$\W_b(\G_n)$ of $\G_n$ to base $b$ is
\begin{equation}\label{new-summatory}
\W_b(\G_n) := \frac{1}{b-1} \Big(2\Sum_b(n) - (n-1) \dgt_b(n)   \Big).
\end{equation}
\end{defn}
Theorem \ref{th39} shows that  for prime $p$ we have  $\W_p(\G_n)= \ord_p(\G_n)$. 
However for  composite $b$ the function $\W_b(\G_n)$ 
does not always   coincide with the largest power of $b$ dividing $\G_n$,
even for  $b=p^k \, (k \ge 2)$ a prime power, i.e. for composite $b$ 
 $\W_b(\G_n) \ne \ord_b(\G_n)$ occurs for some $n$.

One may obtain an upper bound for $\W_b(n)$ using an upper bound
for the  running digit sum  function $\Sum_b(n)= \sum_{m=0}^{n-1} d_b(m)$.
In 1952 Drazin and Griffith \cite{DG52} obtained the following
sharp upper bound, as a special case of more general results.


\begin{thm}\label{th54a} {\em (Drazin and Griffith (1952))}
Let $b \ge 2$ be an integer. Then for all $n \ge 1$, 
\begin{equation}\label{sum-bound}
\Sum_b(n) \le \frac{b-1}{2} n \log_b n.
\end{equation}
and equality holds  if and only if  $n= b^k$ for  $k \ge 0$.
\end{thm}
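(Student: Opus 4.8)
The plan is to prove the bound $\Sum_b(n) \le \frac{b-1}{2} n \log_b n$ by induction on the number of base-$b$ digits of $n$, together with a convexity/monotonicity argument that pins down the equality case. First I would record the basic self-similar recursion for the running digit sum: writing $n = qb + s$ with $0 \le s < b$, one has
\begin{equation}\label{prop-recursion}
\Sum_b(qb+s) = b\,\Sum_b(q) + \frac{b(b-1)}{2}\,q + \binom{s}{2} + q\,s,
\end{equation}
where the first two terms count the contribution of the last digit and the ``carried'' higher-order digits over a complete sweep of $q$ blocks of length $b$, and the last two terms handle the final incomplete block of length $s$. (The exact bookkeeping of the tail term is routine and I would verify it by splitting $\{0,1,\dots,n-1\}$ into the $q$ intervals $[\,jb,(j+1)b\,)$ for $0\le j<q$ plus the leftover interval $[\,qb, qb+s\,)$.) The key structural point is that $\Sum_b$ is built from smaller copies of itself scaled by $b$.

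Next I would set $h(n) := \frac{b-1}{2} n \log_b n$ and check that the claimed majorant satisfies the ``super-recursion''
\begin{equation}\label{prop-super}
h(qb+s) \ge b\,h(q) + \frac{b(b-1)}{2}\,q + \binom{s}{2} + q\,s
\end{equation}
for all $0 \le s < b$, with equality exactly when $s=0$ and $n=qb=b^{k}$. This reduces the theorem, by induction on $\lfloor \log_b n\rfloor$ (base case $1 \le n \le b$ done directly: $\Sum_b(n)=\binom{n}{2}\cdot\frac{?}{}$ — more precisely $\Sum_b(n) = \binom{n}{2}$ for $n\le b$ when we note $d_b(j)=j$ there, giving $\binom n2 \le \frac{b-1}{2}n\log_b n$ with equality only at $n=1$ and $n=b$), to verifying the single inequality \eqref{prop-super}. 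To do that I would fix $q$ and treat the right-hand side minus $b\,h(q)$ as a function of the real variable $s\in[0,b]$; after substituting $h(qb+s) = \frac{b-1}{2}(qb+s)\log_b(qb+s)$ one is left with an inequality in one variable which I would prove by showing the difference function is convex in $s$ (its second derivative is $\frac{b-1}{2}\cdot\frac{1}{(qb+s)\ln b} - 1 \le 0$ only for large $n$, so in fact the difference is \emph{concave}; then checking it is $\ge 0$ at the two endpoints $s=0$ and $s=b$ suffices, and at both endpoints it is a clean identity/inequality about $\log_b$ of consecutive ``round'' numbers). Tracking equality through the induction then forces every digit of $n$ except the leading one to vanish, i.e. $n=b^k$, and conversely $\Sum_b(b^k) = \frac{b-1}{2}\,k\,b^k = h(b^k)$ is immediate from \eqref{prop-recursion} by induction.

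The main obstacle I anticipate is the one-variable inequality controlling the incomplete last block — i.e. proving \eqref{prop-super} cleanly for all $q\ge 1$ and all $0\le s<b$ with the sharp equality characterization, since the tail terms $\binom s2 + qs$ are only ``on average'' matched by the concave function $h$, and one must rule out an interior maximum of the difference creating a violation for small $q$. The safest route is probably to avoid calculus entirely and instead compare $\Sum_b(n)$ with $\Sum_b(n')$ for the two endpoints $n' = b^{\lfloor\log_b n\rfloor}$ and $n'' = b^{\lceil\log_b n\rceil}$ directly, using the telescoping identity $\Sum_b(m+1)-\Sum_b(m) = d_b(m)$ and the elementary bound $d_b(m) \le (b-1)(1+\log_b m)$, then invoking concavity of $t\mapsto t\log_b t$ to interpolate; this sidesteps the delicate middle range. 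Either way, once \eqref{prop-super} is in hand the induction closes and the equality case drops out, since \eqref{prop-recursion} versus \eqref{prop-super} forces $s=0$ at every level of the recursion.
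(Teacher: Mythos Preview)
The paper does not give a proof here; it merely cites the original Drazin--Griffith paper. Your strategy---a self-similar recursion for $S_b$ plus induction on the number of digits, closed by a one-variable concavity check---is sound in outline, but the execution has a real gap. Your bottom-digit recursion is wrong: the tail block $\{qb,\dots,qb+s-1\}$ contributes $\sum_{r=0}^{s-1}(d_b(q)+r)=s\,d_b(q)+\binom{s}{2}$, not $qs+\binom{s}{2}$ (check $b=2$, $q=3$, $s=1$: your formula gives $10$ but $S_2(7)=9$). This is not cosmetic. With the correct term $s\,d_b(q)$ in place of $qs$, the ``super-recursion'' you want for $h(n)=\tfrac{b-1}{2}n\log_b n$ fails whenever $d_b(q)$ is near its maximum: for $q=b^k-1$ one has $d_b(q)=(b-1)k$, so $s\,d_b(q)\approx (b-1)s\log_b(qb)$ is roughly twice the available increment $h(qb+s)-h(qb)\approx\tfrac{b-1}{2}s\log_b(qb)$, and the inductive step does not close from the hypothesis $S_b(q)\le h(q)$ alone. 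Your fallback plan also stumbles on a sign: $t\mapsto t\log_b t$ is convex, not concave.

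The clean fix is to peel off the \emph{top} digit instead. Writing $n=a\,b^k+m$ with $1\le a\le b-1$ and $0\le m<b^k$, one has the exact identity $S_b(n)=\tfrac{b-1}{2}akb^k+\binom{a}{2}b^k+am+S_b(m)$, using $S_b(b^k)=\tfrac{b-1}{2}kb^k$. Applying the inductive bound $S_b(m)\le h(m)$ and substituting $x=m/b^k\in[0,1)$, all $k$-dependence cancels and one is left with the $k$-free inequality $\binom{a}{2}+ax+\tfrac{b-1}{2}x\log_b x\le\tfrac{b-1}{2}(a+x)\log_b(a+x)$. The difference of the two sides is strictly concave in $x$, so it suffices to check $x=0$ and $x=1$; both reduce to the single elementary fact that $(b-1)\log_b u\ge u-1$ for $u\in[1,b]$, which holds (strictly on the open interval) because the left side minus the right is strictly concave in $u$ and vanishes at $u=1$ and $u=b$. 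This same inequality is exactly your base case $n\le b$, and tracking equality through the induction forces $a=1$ and $m=0$, hence $n=b^k$.
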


\begin{proof}
This result is \cite[Theorem 1]{DG52}, taking $t=1$, 
asserting $\Delta_1(b, n) \ge 0$ with equality for $n= b^k$. In their notation 
$\sigma_1(b) = (b-1)/2$, 
$F_1(b, n) = \frac{b-1}{2} n \log_b n$ and 
$\Delta_1 (b, n) = \frac{2}{b-1}( F_1(b, n) - \Sum_b(n)).$

\end{proof}

We deduce the following upper bound for the generalized order to base $b$.


\begin{thm}\label{th54b} 
Let $b \ge 2$ be an integer.  Then for all $n \ge 1$, 
\begin{equation}\label{sum-bound}
\W_b(\G_n) \le  n \log_b n - \frac{n-1}{b-1}.
\end{equation}
\end{thm}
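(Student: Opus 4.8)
The plan is to substitute the Drazin--Griffith bound (Theorem~\ref{th54a}) directly into the defining formula \eqref{new-summatory} for $\W_b(\G_n)$ and then discard the digit-sum factor using the trivial inequality $\dgt_b(n) \ge 1$.

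First I would recall from Definition~\ref{def53} that
$$
\W_b(\G_n) = \frac{1}{b-1}\Big(2\Sum_b(n) - (n-1)\dgt_b(n)\Big).
$$
Since $n \ge 1$, the coefficient $n-1$ is nonnegative, and since the leading base-$b$ digit $a_k(n)$ is at least $1$ we have $\dgt_b(n) \ge 1$; hence $(n-1)\dgt_b(n) \ge n-1$, giving
$$
\W_b(\G_n) \le \frac{1}{b-1}\Big(2\Sum_b(n) - (n-1)\Big).
$$
Next I would apply Theorem~\ref{th54a}, namely $\Sum_b(n) \le \tfrac{b-1}{2}\, n \log_b n$, to obtain
$$
\W_b(\G_n) \le \frac{1}{b-1}\left(2\cdot\frac{b-1}{2}\, n \log_b n - (n-1)\right) = n \log_b n - \frac{n-1}{b-1},
$$
which is the asserted bound \eqref{sum-bound}.

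I do not expect any serious obstacle here: the only points requiring care are that $n-1 \ge 0$ is what makes it legitimate to replace $\dgt_b(n)$ by its lower bound $1$ (so the inequality points the right way), and the degenerate case $n=1$, where both sides vanish since $\Sum_b(1)=\dgt_b(0)=0$ and $\log_b 1 = 0$. If one wishes, one may also note the equality case: equality in Theorem~\ref{th54a} forces $n=b^k$, and for $n=b^k$ one automatically has $\dgt_b(n)=1$, so equality in \eqref{sum-bound} holds precisely for $n=b^k$, $k\ge 0$ (this is consistent with the direct computation $\W_b(\G_{b^k}) = k\,b^k - \frac{b^k-1}{b-1}$). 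However, the statement as given asks only for the inequality, so this remark is optional.
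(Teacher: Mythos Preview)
Your proof is correct and follows essentially the same approach as the paper: both combine the Drazin--Griffith bound $\Sum_b(n)\le \tfrac{b-1}{2}\,n\log_b n$ with the trivial estimate $\dgt_b(n)\ge 1$ inside the defining formula for $\W_b(\G_n)$, the only difference being the order in which the two inequalities are applied. Your added remark on the equality case $n=b^k$ is correct and goes slightly beyond what the paper records.
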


\begin{proof}
Using the definition and Theorem \ref{th54a} we  have
\begin{eqnarray*}
\W_b(\G_n)  & =  &\frac{1}{b-1} \Big( 2\Sum_b(n) - (n-1) \dgt_b(n)   \Big) \\
&\le & n \log_b n - \frac{n-1}{b-1}\dgt_b(n)  \le n \log_b(n) - \frac{n-1}{b-1},
\end{eqnarray*}
as asserted. 
\end{proof}

For the case that $b=p$ is prime, 
we obtain a slight improvement on this upper bound in
Theorem \ref{ordpgapk-1}.

%
%
%

\subsection{Summatory function of base $b$ digit sums: Delange's theorem}\label{sec34}

 The detailed behavior of the running digit sum function $\Sum_b(n)=\sum_{j=0}^{n-1} d_b(n)$ 
 has complicated, interesting properties.
In 1968 Trollope \cite{Tro68} obtained an exactly describable closed form for $\Sum_2(n)$.
In 1975 Delange \cite{Del75} obtained the following definitive result 
applying to   $\Sum_b(n)$ for  all bases $b \ge 2$.


\begin{thm}\label{th313} {\em (Delange (1975))}
Let $b \ge 2$ be an integer. 

(1) For all integers $n \ge 1$, 
\begin{equation} \label{279}
 \Sum_b(n) = \big(\frac{b-1}{ 2}\big) n \log_b n + \f_b( \log_b n) n,
\end{equation}
in which $\f_b(x)$ is a  continuous real-valued function
which is  periodic of period $1$.

(2) The function $\f_b(x)$ has a Fourier series expansion
$$
\f_b(x) = \sum_{k \in \ZZ}  c_b(k) e^{2 \pi i k x},
$$
 whose Fourier coefficients
are, for $k \ne 0$,
\begin{equation}\label{zetac}
c_b(k) = -\frac{ b-1}{2 k \pi i} \big( 1+ \frac{2 k \pi i}{\log b} \big)^{-1} \zeta( \frac{2 k \pi i}{\log b})
\end{equation}
and, for the  constant term $k=0$, 
\begin{equation}\label{CT}
c_b(0) = \frac{b-1}{2 \log b}( \log(2 \pi)-1) - (\frac{b+1}{4}).
\end{equation}
The function $\f_b(x)$ is continuous but not differentiable.
\end{thm}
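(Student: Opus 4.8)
The plan is to prove Delange's theorem by analyzing the generating Dirichlet series attached to the digit-sum function and extracting the asymptotics via a Mellin--Perron contour argument. First I would set up the Mellin transform: the key identity is that the running digit sum satisfies a natural self-similar recursion coming from the base-$b$ radix structure, namely writing $n = b q + r$ with $0 \le r < b$ gives $\Sum_b(bq+r) = b\,\Sum_b(q) + \Sum_b(r) + r q + \binom{r}{2}$ (or the analogous exact relation), which one can iterate. The cleaner route is to introduce the Dirichlet series $D(s) := \sum_{m \ge 1} \big(d_b(m) - d_b(m-1)\big) m^{-s}$ (a telescoped form), or directly $\sum_{m\ge 1} d_b(m)(m^{-s} - (m+1)^{-s})$, and to show using the digit recursion that this series has a closed form as a rational function of $b^{-s}$ times $\zeta(s)$: concretely one expects something like $D(s) = \frac{b-1}{2}\cdot\frac{b^{1-s}}{1 - b^{1-s}}\cdot$ (elementary factor), with poles at $s = 1 + \frac{2k\pi i}{\log b}$ for $k \in \ZZ$ coming from the vanishing of $1 - b^{1-s}$.

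Second, I would apply a Perron-type / Mellin inversion formula to recover $\Sum_b(n) = \sum_{m < n} d_b(m)$ from $D(s)$, moving the contour of integration to the left of the line $\Re(s) = 1$ and collecting residues. Each pole at $s_k = 1 + \frac{2k\pi i}{\log b}$ contributes a term of size $n^{s_k} = n \cdot e^{2\pi i k \log_b n}$ times a constant; the double pole at $s_0 = 1$ (from $\zeta(s)$ meeting the $1-b^{1-s}$ factor) produces both the main term $\big(\tfrac{b-1}{2}\big) n \log_b n$ and a contribution to the constant term $c_b(0)$. Assembling all residues gives exactly $\Sum_b(n) = \big(\tfrac{b-1}{2}\big) n\log_b n + n \sum_{k\in\ZZ} c_b(k) e^{2\pi i k \log_b n}$ with the $c_b(k)$ as claimed in \eqref{zetac} and \eqref{CT}; one reads off that the bracketed quantity is a function of $\log_b n$ alone, periodic of period $1$, which defines $\f_b(x)$, and its Fourier expansion is literally the residue sum. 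The value $c_b(0)$ in \eqref{CT} should fall out of carefully computing the Laurent expansion of the integrand at $s=1$, using $\zeta(s) = \frac{1}{s-1} + \gamma + O(s-1)$ and the expansion of $b^{1-s}$; the appearance of $\log(2\pi)$ is the $\gamma$-type constant getting shifted, consistent with the constant $1 - \tfrac{1}{2}\log(2\pi)$ seen in Theorem~\ref{th32b}.

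Third, for continuity of $\f_b$ I would verify that the Fourier series $\sum_k c_b(k) e^{2\pi i k x}$ converges absolutely, which follows from the bound $c_b(k) = O(1/|k|) \cdot \zeta(\tfrac{2k\pi i}{\log b})$ together with the classical convexity bound $|\zeta(it)| = O(|t|^{1/2}\log|t|)$ on the critical line $\Re(s)=0$ — wait, that only gives $|c_b(k)| = O(|k|^{-1/2+\epsilon})$, not absolute convergence. So instead I would argue continuity directly: $\f_b(\log_b n)$ agrees at integer $n$ with an explicitly continuous interpolation obtained from the exact recursion for $\Sum_b$ on the interval $[b^j, b^{j+1}]$ (piecewise-analytic, glued continuously at the breakpoints $n = c\,b^j$), and one checks the one-sided limits match — this is essentially Trollope's approach for $b=2$ generalized. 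For non-differentiability, I would show the periodic function has the self-affine functional equation $\f_b(x) = \tfrac{1}{b}\f_b(bx) + (\text{affine})$ on each subinterval of length $1/(\text{something})$ forcing the graph to be a fractal-type curve with no tangent at a dense set of points, or more simply exhibit that the left and right derivatives disagree at the dyadic-type points $x = \log_b(c\,b^j)$.

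The main obstacle I expect is twofold: (a) justifying the contour shift in the Perron formula — the line $\Re s = 1$ is the abscissa of convergence and $\zeta$ has infinitely many poles of $D(s)$ marching up that line, so one needs a truncated Perron formula with careful control of the horizontal and vertical tail integrals (using subconvexity or at least $\zeta(\sigma+it) = O(|t|^{A})$ in a strip) to push the contour to $\Re s = \tfrac12 - \epsilon$ or so and get a genuine error term, rather than a merely formal residue sum; and (b) pinning down the exact constant $c_b(0)$, which is a delicate bookkeeping exercise with the double-pole Laurent expansion where sign errors and factor-of-$2$ errors are easy. The periodicity and the Fourier-coefficient formula \eqref{zetac} for $k \ne 0$ are, by contrast, essentially automatic once the residue structure is in place.
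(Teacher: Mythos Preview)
The paper does not actually prove this theorem: its ``proof'' simply cites Delange \cite{Del75} for both parts, adding only the observation that the bound $|\zeta(it)| = O((1+|t|)^{1/2+\epsilon})$ makes the Fourier series absolutely convergent (hence $\f_b$ continuous), and that Delange deduced non-differentiability from a self-similar functional relation. The remark immediately following the theorem is relevant to your plan: it records that Delange's original argument is real-analytic, while the Mellin--Perron route you propose is precisely the later approach of Mauclaire--Murata \cite{MM83a,MM83b} and Flajolet et al.\ \cite{FGKPT94}, and it explicitly notes that the Mellin approach recovers the Fourier expansion but \emph{not} the non-differentiability. So your plan is a known, valid alternative for parts (1) and the Fourier-coefficient half of (2), but is genuinely different from what Delange did.

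One concrete slip: your Fourier-coefficient estimate is off by a power of $|k|$. The formula \eqref{zetac} has \emph{two} decaying factors, $\frac{1}{2k\pi i}$ and $\big(1+\frac{2k\pi i}{\log b}\big)^{-1}$, so $|c_b(k)| = O(|k|^{-2})\cdot|\zeta(2k\pi i/\log b)| = O(|k|^{-3/2+\epsilon})$, not $O(|k|^{-1/2+\epsilon})$. This \emph{is} absolutely summable, so continuity of $\f_b$ follows immediately from uniform convergence of the Fourier series---exactly the argument the paper sketches---and your fallback to a direct Trollope-style interpolation is unnecessary for continuity. For non-differentiability, however, your instinct is right that the Fourier data alone will not do it; the self-affine functional equation you mention is indeed the mechanism Delange uses, and you should expect that part to require a separate real-variable argument (Tenenbaum \cite{Ten97} treats this in generality).
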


\begin{proof}
(1) This statement is the main Theorem\footnote{
Delange uses a different notation for digit sums.  He writes $S_p(n)$ for
the function that we call $\dgt_p(n)$.}  of 
Delange \cite[p. 32]{Del75}.

(2) The explicit Fourier series expression is
derived  in Section 4 of \cite{Del75}. The Fourier coefficients are complex-valued
with  $c_{-k} = \bar{c}_k$, as is required for a real-valued function $\f_b(x)$.
The Fourier series coefficients in  \eqref{zetac} for $\f_{b}(x)$ 
 involve values of the Riemann zeta function evaluated at points on the line $Re(s) =0$ 
 which are, $b=p$ a prime  the poles of the Euler product factor 
at $p$ in the Euler product for $\zeta(s)$, which is $(1- \frac{1}{p^s})^{-1}$.
The growth rate of the Riemann zeta function on the line $Re(s)=0$ states that 
$|\zeta (i t)|= O ( (1+ |t|)^{1/2 + \epsilon})$, which bounds the Fourier coefficients
sufficiently to prove that $\f_b(x)$ is a continuous function. 
Delange deduces
the everywhere non-differentiable
property of $\f_b(x)$ from
a self-similar functional relation that $\f_b(x)$ satisfies.
\end{proof}

\begin{rem}
Delange proved Theorem \ref{th313} using methods from real analysis. Different
approaches were introduced 
 in 1983  by Mauclaire and Murata (\cite{MM83a},  \cite{MM83b}),
and in 1994 by Flajolet et al. \cite[Theorem 3.1]{FGKPT94},   using 
complex analysis and Mellin transform techniques.
The latter methods obtain the Fourier expansion of $\f_b(x)$
but do not establish the non-differentiability properties
of the function $\f_b(x)$.  In another direction, in 1997 G. Tenebaum \cite{Ten97} extended
the non-differentiability property of $\f_b(x)$ to other periodic functions  arising 
 from summation formulas in a similar fashion. 
Generalizations  of the Delange function connected to  higher moments of
digit sums were studied by Coquet \cite{Coq86} and Grabner and Hwang \cite{GH05}.
 \end{rem}
 
We next show that the  function $\f_b(n)$ is  nonpositive,
and give some estimates for its size, 
using results of Drazin and Griffiths \cite{DG52}.

\begin{thm}\label{th48a} 

(1) For  integer $b \ge 2$ and all real $x$, 
\begin{equation}\label{481}
\f_b( x) \le 0,
\end{equation}
and equality $\f_b(x) =0$ holds only for $x=n$, $n \in \ZZ$.

(2) For integer $b \ge 3$ and all real $x$, 
\begin{equation}\label{482}
\frac{2}{b-1}|\f_b(x)| \le \frac{b-1}{b-2} \frac{ \log (b-1)}{\log b}.
\end{equation}
\end{thm}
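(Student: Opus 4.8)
The plan is to deduce both statements from the Drazin--Griffiths inequality (Theorem \ref{th54a}) together with the Delange identity \eqref{279}. Combining $\Sum_b(n) = \big(\frac{b-1}{2}\big) n \log_b n + \f_b(\log_b n)\, n$ with the bound $\Sum_b(n) \le \frac{b-1}{2} n \log_b n$ gives immediately that $\f_b(\log_b n) \le 0$ for every integer $n \ge 1$, with equality exactly when $n = b^k$, i.e.\ when $\log_b n \in \ZZ_{\ge 0}$. Since $\f_b$ is periodic of period $1$, the values $\{\log_b n \bmod 1 : n \ge 1\}$ are dense in $[0,1)$ (the fractional parts of $\log_b n$ equidistribute, or more elementarily are dense because $\log_b(n+1) - \log_b n \to 0$), and $\f_b$ is continuous by Theorem \ref{th313}(2). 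Hence $\f_b(x) \le 0$ for all real $x$, proving \eqref{481}; and by periodicity the only zeros are at $x \in \ZZ$, since $\f_b(x)=0$ forces $x$ to be a limit of points $\log_b n$ with $\f_b$ vanishing, which by the equality case of Theorem \ref{th54a} pins them to $n = b^k$.

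For part (2), I would again use the representation \eqref{279} but now with a \emph{lower} bound on $\Sum_b(n)$, which is where the finer Drazin--Griffiths results enter. The quantity $\frac{2}{b-1}|\f_b(x)|$ is, up to the periodization, the normalized deficit $\frac{2}{b-1}\big(\frac{b-1}{2} n\log_b n - \Sum_b(n)\big)/n$, and one needs to bound how large this deficit can get. The natural candidates for near-extremal $n$ are numbers just below a power of $b$, and evaluating (or lower-bounding) $\Sum_b(n)$ there — together with the self-similar functional equation for $\f_b$ that Delange establishes, which lets one pass from the values at integers $n$ to the supremum over all real $x$ — should yield the constant $\frac{b-1}{b-2}\frac{\log(b-1)}{\log b}$. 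I expect the precise shape of this constant to come from the worst case being essentially a geometric-type configuration of digits, so that the $\log(b-1)$ factor arises from comparing $\log(b^k - 1)$-type quantities against $k\log b$.

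The main obstacle will be part (2): part (1) is a soft continuity-plus-density argument, but getting the \emph{sharp} (or at least the stated) constant in \eqref{482} requires extracting an explicit lower bound for $\Sum_b(n)$ from \cite{DG52} rather than just the upper bound already quoted in Theorem \ref{th54a}. One has to identify which of Drazin and Griffiths' more general estimates (their parameter $t$, their functions $F_t$ and $\Delta_t$) specializes to the needed inequality, and then convert a bound valid at integer arguments into one valid for all real $x$ using the periodicity and the functional equation of $\f_b$. The restriction to $b \ge 3$ in the statement (the bound being vacuous or the constant degenerating at $b=2$, where $\frac{b-1}{b-2}$ blows up) is a hint that the argument genuinely uses $b - 2 \ge 1$, presumably in summing a geometric series with ratio $\frac{1}{b-1} < 1$ that is only a strict contraction for $b \ge 3$; I would watch that step carefully.
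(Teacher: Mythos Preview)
Your approach is essentially the same as the paper's: both parts are deduced from Drazin--Griffiths results on $\Delta_1(b,n) = -\tfrac{2}{b-1}\f_b(\log_b n)$, combined with the continuity and period-$1$ periodicity of $\f_b$ and the density of the fractional parts of $\log_b n$ in $[0,1)$. For part~(2) you are over-anticipating the difficulty: the paper simply invokes \cite[Theorem~2]{DG52}, which already gives the explicit upper bound $\Delta_1(b,n)\le \tfrac{b-1}{b-2}\tfrac{\log(b-1)}{\log b}$ for $b\ge 3$, so no functional equation for $\f_b$ and no analysis of extremal digit configurations is needed---the same density-plus-continuity argument as in part~(1) transfers this bound from the points $\log_b n$ to all real $x$.
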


\begin{proof}

(1) Drazin and Griffiths \cite{DG52} study a function
$\Delta_1(b, n)$ for integer $(b, n)$ which is exactly
$\Delta_1(b, n)= -\frac{2}{b-1} \f_b(\log_b n)$ 
The nonpositivity for $x \in [0,1]$   follows from \cite[Theorem 1]{DG52}, 
using the fact that $F_b(x)$ is periodic of period $1$
and a continuous function, together with the fact that 
the  fractional
parts of $\log_b n$ are dense in $[0,1]$.

(2) This result follows from the bound on $\Delta_1(b, n)$ 
in \cite[Theorem 2]{DG52} in a similar fashion.
\end{proof} 


\begin{figure}[!htb]
\includegraphics[width=120mm]{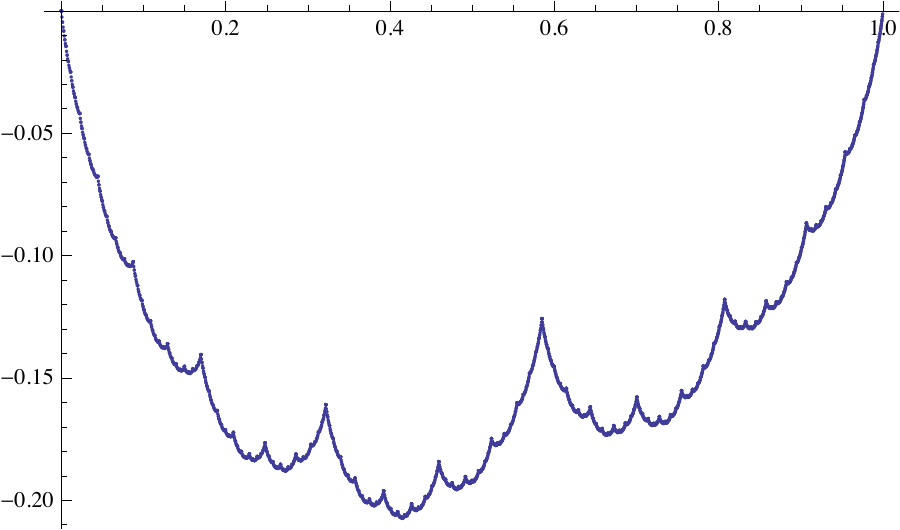}
\caption{The periodic function $\f_2(x)$.}
\label{fig31-delange}
\end{figure}

 Figure \ref{fig31-delange}  presents  a picture of the function $\f_2(x)$  computed
by J. Arias de Reyna. 
It shows the non-positivity of $\f_2(x)$ but 
 can only hint at the property of $\f_2(x)$  being non-differentiable
at every point. 
In fact  $\f_2(x)$ is related to a famous everywhere non-differentiable function,
the Takagi function $\tau(x)$, introduced by Takagi \cite{Tak03} in 1903,
which is given by
\begin{equation}\label{takagi}
 \tau(x) := \sum_{n=0}^{\infty} \frac{1}{2^n}\ll 2^n x \gg,
\end{equation}
where $\ll x \gg$ is the distance from $x$ to the nearest integer.
 This connection can be deduced from work of Trollope \cite{Tro68},
as explained in   \cite[Theorems 9.1 and 9.2]{Lag12}. One finds that
\begin{equation}
\f_2(x) = -\frac{1}{2} \ \Big( \frac{ \tau(2^x -1)}{2^x} + \frac{2^x -1}{2} - \frac{2^x -1}{2^x}\Big),  \quad 0 \le x \le 1.
\end{equation} 
Although $\tau(x)$ is everywhere non differentiable, its oscillations 
on small scales are known to not be too large.
There is a constant $C$ such that for all real $x$, 
$$
|\tau(x+h) - \tau(x)| \le C |h| \log \frac{1}{|h|}, \quad  \mbox{for all}\quad  |h| \le \frac{1}{2}.
$$


For our application to $\G_n$, $b=p$ is a prime,   the formula  \eqref{279} for $S_b(n)$ gives a smooth
``main term"  
 $\frac{p-1}{2} n \log_p n$ and  a slowly oscillating ``remainder term"   $R_p(n) := \f_b (\log_b n) n$
of order $O(n)$,   with an explicit constant in the $O$-symbol, which encodes a logarithmic
rescaling of the value of $n$.

%
%
%

\section{Prime-power divisibility of  binomial products $\G_n$: Formulas using fractional parts}\label{sec5}

%
%
%

\subsection{Prime-power divisibility of  $\G_n$: bilinear radix expansion  form}\label{sec31}

We give a third formula for $\ord_p(\G_n)$, which is also based on   the base $p$ radix expansion of $n$,
but which expresses  it as  a linear and bilinear expression in its base  $p$ coefficients.  
\begin{thm}\label{opgn} 
 Let $p$ be prime and write the base $p$ expansion of  $n = \sum_{j=0}^{k} a_j p^j$, with $a_k \ne 0$. Then
\begin{equation}\label{ordgndef2}
\ord_p(\G_n)= \sum_{j=1}^kja_jp^j-\left(\sum_{j=1}^ka_j\left(\frac{p^j-1}{p-1}\right)+
\sum_{j=0}^k\frac{1}{p^{j+1}}\Big(\sum_{\uu=0}^j a_{\uu}p^\uu\Big)\Big( \sum_{v=j+1}^ka_vp^{v}\Big)\right).
\end{equation}
\end{thm}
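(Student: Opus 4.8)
The plan is to derive \eqref{ordgndef2} from the digit-summation formula \eqref{summatory} of Theorem \ref{th39}, namely $\ord_p(\G_n) = \frac{1}{p-1}\big(2\Sum_p(n) - (n-1)\dgt_p(n)\big)$, by finding an exact closed-form expression for the running digit sum $\Sum_p(n)$ in terms of the base $p$ digits $a_0,\dots,a_k$ of $n$. The key combinatorial identity I would establish is a formula of the shape
\begin{equation*}
\Sum_p(n) = \sum_{j=0}^{k} \left( a_j \binom{p}{2} p^{j} + \Big(\sum_{v=j+1}^{k} a_v p^{v}\Big) \binom{a_j}{1}\frac{(a_j-1)}{1}\cdots \right),
\end{equation*}
but more precisely the standard ``odometer'' decomposition: as $m$ ranges over $0 \le m < n$, split according to the most significant digit position at which $m$ first differs from $n$. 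Concretely, writing $n = \sum a_j p^j$, the integers below $n$ are partitioned into blocks indexed by $(j, c)$ with $0 \le c < a_j$, where the block consists of all $m$ whose digits agree with $n$ in positions $>j$, have digit $c$ in position $j$, and are arbitrary in positions $<j$. Summing $\dgt_p(m)$ over each such block of $p^j$ consecutive integers gives a contribution $p^j\big(\sum_{v>j} a_v + c\big) + p^j \cdot \dgt\text{-average over a full block}$, and summing the full-block digit sum $\sum_{m=0}^{p^j-1}\dgt_p(m) = j\,p^{j-1}\binom{p}{2}\cdot\frac{2}{p-1}$ — more usefully $\sum_{m=0}^{p^t-1}\dgt_p(m) = t \cdot \frac{p-1}{2} p^{t-1}$.

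With that lemma in hand, the computation proceeds by assembling three pieces. First, $\sum_j$ over the full-block contributions yields a term proportional to $\sum_j j a_j p^{j-1}$, which after multiplication by $2/(p-1)$ and comparison produces the leading bilinear-free term $\sum_{j=1}^k j a_j p^j$ of \eqref{ordgndef2}. Second, the ``prefix from higher digits'' contributions $\sum_j p^j \cdot a_j \cdot \big(\sum_{v>j} a_v p^{v-j}\big)$-type terms reorganize, after the $\frac{1}{p-1}$ scaling and telescoping with the $(n-1)\dgt_p(n)$ term, into the bilinear sum $\sum_{j=0}^k \frac{1}{p^{j+1}}\big(\sum_{u\le j} a_u p^u\big)\big(\sum_{v>j} a_v p^v\big)$; this is the delicate bookkeeping step, since one must carefully re-index the double sum and verify the powers of $p$ land as stated. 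Third, the ``$c$ ranges over $0\le c<a_j$'' contributions $\sum_j p^j \binom{a_j}{2} = \sum_j p^j \frac{a_j(a_j-1)}{2}$, combined with the pieces of $(n-1)\dgt_p(n) = \big(\sum a_j p^j - 1\big)\big(\sum a_j\big)$, should collapse to the single-sum term $\sum_{j=1}^k a_j\frac{p^j-1}{p-1}$. I would verify this last collapse by expanding $(n-1)\dgt_p(n)$ fully and matching the ``diagonal'' $a_j^2 p^j$ and $a_j$ coefficients against what the block decomposition produced.

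An alternative, cleaner route I would at least check in parallel: substitute the closed form $\Sum_p(n)$ via the recursion $\Sum_p(n) = \Sum_p(p\lfloor n/p\rfloor) + (\text{correction})$, or directly prove \eqref{ordgndef2} by induction on $k$ (the number of digits), using the digit-by-digit structure — but this tends to obscure where the bilinear term comes from, so the block-decomposition derivation is preferable for exposition.

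The main obstacle I anticipate is purely the re-indexing bookkeeping in the second piece: getting the cross-terms $a_u a_v p^{u+v}$ to regroup, after division by $p-1$ and after absorbing the analogous cross-terms from $n\cdot\dgt_p(n)$, into exactly $\sum_{j}\frac{1}{p^{j+1}}\big(\sum_{u\le j}a_u p^u\big)\big(\sum_{v>j}a_v p^v\big)$ requires tracking which ``splitting index'' $j$ each pair $(u,v)$ with $u<v$ is assigned to and confirming it appears with total weight $p^{u+v-j-1}$ summed over $u \le j < v$, which telescopes correctly. There are no conceptual difficulties — everything reduces to Theorem \ref{th39} plus elementary manipulation of geometric sums — but the algebra is unforgiving, so I would organize it as a separate lemma computing $\Sum_p(n)$ in closed form, prove that lemma by the block decomposition above, and then obtain \eqref{ordgndef2} as a one-line substitution into \eqref{summatory}.
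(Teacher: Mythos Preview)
Your approach is correct in outline and would yield \eqref{ordgndef2}, but it is genuinely different from the paper's proof. The paper does \emph{not} go through Theorem \ref{th39}; instead it gives an independent derivation via Kummer's carry-counting theorem (Theorem \ref{th25}). The key object is the \emph{$j$-th position total carry function} $c_j(n) = \sum_{t=0}^n c_j(n,t)$, where $c_j(n,t)\in\{0,1\}$ records whether adding $t$ to $n-t$ in base $p$ produces a carry out of position $j$. Kummer gives $\ord_p(\G_n)=\sum_{j\ge 0} c_j(n)$, and a direct count (Lemma \ref{cjdef}) yields the closed form
\[
c_j(n)=\Big(p^{j+1}-1-\sum_{u=0}^j a_u p^u\Big)\Big(\sum_{t=j+1}^k a_t p^{t-j-1}\Big),
\]
after which the three sums in \eqref{ordgndef2} fall out by expanding the product and simplifying each piece separately.

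What each route buys: your plan recycles Theorem \ref{th39} and so is more economical once that result is in hand --- and indeed the paper's Appendix B proves the equivalence $\W_b(\G_n)=\W_b^{\ast}(\G_n)$ for all bases $b\ge 2$ by a floor-function computation in the same spirit (though not via your block decomposition of $\Sum_p$). The paper's carry-function route, by contrast, is self-contained (not relying on Theorem \ref{th39}), makes the bilinear term transparent as a product of ``how many low-digit patterns force a borrow'' times ``how many high-digit prefixes are available,'' and produces the functions $c_j(n)$ as objects of independent interest (see Remark \ref{rem34} on generalized polynomials). Your anticipated bookkeeping obstacle --- regrouping the cross-terms $a_u a_v$ into the bilinear sum --- is exactly what the carry-function viewpoint sidesteps: the factorization of $c_j(n)$ hands you the bilinear structure directly rather than requiring you to reconstruct it.
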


\indent The formula \eqref{ordgndef2} has several interesting features.
\begin{enumerate}
\item[(i)]
The formula expresses $\ord_p(\G_n)$ as  a difference 
of two positive terms, 
\begin{eqnarray*}
 \SP_{p,3}^{+}(n) &:= & \sum_{j=1}^kja_jp^j\\
 \mbox{and} \quad\quad\quad& ~\\
\SP_{p,3}^{-}(n) & := & \sum_{j=1}^ka_j\frac{p^j-1}{p-1}+\sum_{j=0}^k\frac{1}{p^j}\Big(\sum_{\uu=0}^ja_{\uu}p^\uu\Big)\Big( \sum_{v=j+1}^ka_vp^{v}\Big).
\end{eqnarray*}
An immediate consequence is the upper bound 
$\ord_p(\G_n) \le S_{p,3}^{+}(n)$
which is useful in  obtaining   upper bounds for $\ord_p(\G_n)$, see Section \ref{sec32}.
\item[(ii)]
The first two sums on the right   in \eqref{ordgndef2} are {\em linear} functions of the base $p$ digits of $n$, while  the third sum is {\em bilinear} in
the base $p$ digits.
\item[(iii)]
The first sum on the right  in \eqref{ordgndef2} makes sense as a $p$-adic function.
That is, it extends continuously to  a $p$-adically convergent series for $n \in \ZZ_p$, the $p$-adic integers.  
However the last two sums on the right, treated separately or together, do not have continuous extensions to $\ZZ_p$.
\end{enumerate}

The right side  of \eqref{ordgndef2} makes sense for arbitrary bases $b \ge 2$, so
we  make the following definition for arbitrary $b$.

\begin{defn} \label{def62}
For each integer $b \ge 2$ and  $n \ge 1$ set 
\begin{equation*}
\W_b^{\ast}(\G_n) := \sum_{j=1}^kja_jb^j-\left(\sum_{j=1}^ka_j\left(\frac{b^j-1}{b-1}\right)+
\sum_{j=0}^k\frac{1}{b^{j+1}}\Big(\sum_{\uu=0}^ja_{\uu}b^{\uu}\Big)\Big( \sum_{v=j+1}^ka_vb^{v}\Big)\right),
\end{equation*}
in which  $n=\sum_{j=0}^k a_j b^j$ is its base $b$ radix expansion.
\end{defn}

For $b=p$ a prime,  we  have $\W_p^{\ast}(\G_n) = \ord_p(\G_n)$ by Theorem \ref{opgn}.
A priori this definition looks different from that of the generalized order $\W_b(n)$ to
base $b$ introduced in Section \ref{sec4}, but in 
 Appendix B we show they coincide:  {\em For each $b \ge 2$ one has
 $$
  \W_b^{\ast}(\G_n)=\W_b(\G_n) \quad \mbox{ for all}\quad  n \ge 1.
  $$
 }

We will deduce Theorem \ref{opgn}
starting from Kummer's formula
for the maximal power of $p$ dividing a binomial coefficient  (Kummer \cite{Kum1852}).

\begin{thm}\label{th25}  {\em (Kummer (1852))}
Given a prime $p$, the exact divisibility $p^e$ of $\binom{n}{t}$ by a power of $p$  is found by writing
$t$, $n-t$ and $n$ in base $p$ arithmetic. Then  $e$ is the number of carries
that occur when adding $n-t$ to $t$ in base $p$ arithmetic, using digits
$\{ 0, 1, 2, \ldots, p-1\}$, working from the least significant digit upward.
\end{thm}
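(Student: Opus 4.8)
The plan is to deduce Kummer's theorem from the digit-sum identity already in hand, namely Proposition \ref{prop310}, which was proved above via de Polignac's formula \eqref{277}. That proposition gives
$$
\ord_p\binom{n}{t} = \frac{1}{p-1}\big(\dgt_p(t) + \dgt_p(n-t) - \dgt_p(n)\big),
$$
so it suffices to show that $\dgt_p(t) + \dgt_p(n-t) - \dgt_p(n)$ equals $(p-1)$ times the number of carries occurring when $n-t$ is added to $t$ in base $p$.

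To do this I would set $a := t$, $b := n-t$, $c := n = a+b$, and write their (terminating) base $p$ expansions $a = \sum_{i \ge 0} a_i p^i$, $b = \sum_{i \ge 0} b_i p^i$, $c = \sum_{i \ge 0} c_i p^i$. The schoolbook addition algorithm, carried out from the least significant digit upward, defines a unique carry sequence $\epsilon_i \in \{0,1\}$ with $\epsilon_0 = 0$ satisfying
$$
a_i + b_i + \epsilon_i = c_i + p\,\epsilon_{i+1} \qquad (i \ge 0);
$$
here $\epsilon_{i+1} \in \{0,1\}$ because $a_i + b_i + \epsilon_i \le (p-1) + (p-1) + 1 < 2p$, and all but finitely many $\epsilon_i$ vanish, so the number of carries $e := \sum_{i \ge 1}\epsilon_i = \sum_{i \ge 0}\epsilon_{i+1}$ is a finite nonnegative integer.

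The key step is to sum this recurrence over all $i \ge 0$. The left-hand side sums to $\dgt_p(a) + \dgt_p(b) + \sum_{i \ge 0}\epsilon_i$, and since $\epsilon_0 = 0$ this equals $\dgt_p(a) + \dgt_p(b) + e$; the right-hand side sums to $\dgt_p(c) + p\sum_{i \ge 0}\epsilon_{i+1} = \dgt_p(c) + pe$. Equating gives $\dgt_p(t) + \dgt_p(n-t) - \dgt_p(n) = (p-1)e$, and dividing by $p-1$ and comparing with Proposition \ref{prop310} yields $\ord_p\binom{n}{t} = e$, which is exactly Kummer's assertion. There is no real obstacle here: the argument is purely combinatorial, and the only points needing care are the well-definedness of the carry sequence, the elementary bound $\epsilon_i \le 1$, and the reindexing $\sum_{i\ge 0}\epsilon_{i+1} = \sum_{i\ge 1}\epsilon_i$ (which is precisely what produces the factor $p-1$ rather than $p$). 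The finiteness of every sum is automatic because the radix expansions terminate.
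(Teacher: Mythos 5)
Your proposal is correct and follows essentially the same route as the paper: both deduce Kummer's theorem from Proposition \ref{prop310} by showing that $\dgt_p(t)+\dgt_p(n-t)-\dgt_p(n)$ equals $p-1$ times the number of carries. Your version merely formalizes, via the carry recurrence $a_i+b_i+\epsilon_i=c_i+p\,\epsilon_{i+1}$ and summation over $i$, the paper's ``by inspection'' observation that each carry lowers the digit sum by $p-1$.
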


\begin{proof} 
 Kummer's theorem easily follows   from Proposition \ref{prop310} as Granville \cite{Gra97} observes.
By inspection we  see that each carry operation adding $t$ to $n-t$ in the $j$-th place reduces the sum of the digits 
in the sum $n$ by $p-1$, since it adds a $1$ in the $(j+1)$-st place while removing a sum of $p$ in the $j$-th
place. Thus the formula on the right in \eqref{ord-binom} counts the number of carries made in adding $t$ to
$n-t$.
\end{proof}

To establish Theorem \ref{opgn} we  first  reinterpret Kummer's formula  as counting the number of borrowings involved in subtracting
$j$ from $n$ in base $p$ arithmetic, now  working from the least significant digit upwards.
As an example, take base $p=3$ and consider
$$
n=  13= (111)_3, \quad t=5 = (12)_3 \quad \mbox{and} \quad  n-t= 8 =(22)_3.
$$
In
the following table we add $(n-t)$ to $t$ on the left  and subtract $t$ from $n$  on the right.
We list  the carries  ($+1$) and the borrowings  ($-1$) on the top line of the table.
 
\begin{equation*}
\begin{array}{|c|ccc| c |ccc|}
\hline
\mbox{carries}: &{\it 1} & {\it 1} & {\it 0} & \mbox{borrows}: & {\it -1} & {\it -1} & {\it 0} \\
\hline
\hline
~&~ & 2 & 2 & \quad \quad  & 1 & 1 & 1 \\
~&+ & 1 & 2 &  \quad \quad & - &  1 & 2 \\
\hline
~&1 & 1 &1 & \quad \quad & 0 & 2 & 2 \\
\hline
\end{array} 
\end{equation*}
 In this example there  are $2$ carries in the additive form versus  $2$ borrowings in the subtractive form.
 
 We will derive the  formula \eqref{ordgndef2}  of Theorem \ref{opgn} by
keeping track of the  total number of borrowings in the addition made for the $j$-digit of $n$,
but treating the contributions to each digit separately, specified  by a function $c_j(n)$ defined below.
For a given $n$ and $0 \le t \le n$ set the {\em $j$-th carry digit} $c_j(n, t)= 1$ or $0$
according as whether
the addition of  $t$ to $n-t$ in base $p$ expansion has a carry digit, or not, 
added to the $(j+1)$-place from the $j$-th place.  Equivalently $c_j(n, t)$ specifies whether there is  a borrowing
from the $(j+1)$-st place  in subtracting $t$ from $n$ in  base $p$ arithmetic.
Here $c_j(n, t)$ depends only on   $n \, (\bmod \, p^{j+1})$ and $t \, (\bmod\, p^{j+1})$.
The table above computes that $c_2(13, 5) =0$.


\begin{defn}\label{de33}
 Let a prime $p$ be fixed, and let a digit position  $j \ge 0$
be given.  The {\em $j$-th position total carry function} $c_j(n)$ is 
 \begin{equation}\label{641}
 c_j(n) : = \sum_{t=0}^n c_j(n, t).
 \end{equation}
\end{defn}
 
Kummer's theorem applied to \eqref{eq301} yields 
\begin{equation}\label{carries}
\ord_p(\G_n) = \sum_{j=0}^{\infty} c_j(n).
\end{equation}
The sum on the right is always finite since  $c_j(n) =0$ for all $ j \ge k = \lfloor \log_p n\rfloor$.


\begin{lem}\label{cjdef}
(1) For a fixed prime $p$,
\begin{equation}\label{cjdef1}
c_j(n)=\left((p^{j+1}-1)-\sum_{\uu=0}^ja_{\uu}p^{\uu}\right) \left(\sum^k_{t=j+1}a_tp^{t-j-1}\right).
\end{equation}

(2) Alternatively we  have
\begin{equation}\label{cjdef2}
c_{j}(n) =\left(p^{j+1}-1-p^{j+1}\left\{\frac{n}{p^{j+1}}\right\}\right)\left(\frac{n}{p^{j+1}}-\left\{\frac{n}{p^{j+1}}\right\}\right)
\end{equation}
Where $j \ge 0$, $[x]$ is the floor function and $\{x\}$ is the fractional part of the rational number. 

\end{lem}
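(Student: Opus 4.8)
The plan is to prove both formulas at once by \emph{counting directly}: for a fixed digit position $j$, I will determine, among the $t \in \{0,1,\dots,n\}$, for how many of them a carry propagates out of place $j$ into place $j+1$ in the addition $t + (n-t) = n$ (equivalently, a borrow propagates out of place $j$ when subtracting $t$ from $n$). The first step is to pin down the carry digit $c_j(n,t)$ in terms of residues: I claim $c_j(n,t) = 1$ if and only if $t \bmod p^{j+1} > n \bmod p^{j+1}$. Indeed, adding $t$ and $n-t$ produces a carry out of place $j$ exactly when $(t \bmod p^{j+1}) + ((n-t) \bmod p^{j+1}) \ge p^{j+1}$; since the left side is congruent to $n \bmod p^{j+1}$ modulo $p^{j+1}$ and lies in $[0, 2p^{j+1})$, it equals $n \bmod p^{j+1}$ when $t \bmod p^{j+1} \le n \bmod p^{j+1}$ and equals $n \bmod p^{j+1} + p^{j+1}$ otherwise, which is the asserted dichotomy.

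Next I would introduce the shorthand $n = q_j p^{j+1} + r_j$ with $q_j = \lfloor n/p^{j+1} \rfloor = \sum_{t=j+1}^k a_t p^{t-j-1}$ and $r_j = n \bmod p^{j+1} = \sum_{u=0}^j a_u p^u$, and write a general $t$ as $t = s p^{j+1} + w$ with $0 \le w \le p^{j+1}-1$ and $s \ge 0$. By the first step, $c_j(n,t) = 1$ forces $w \in \{r_j+1, \dots, p^{j+1}-1\}$, which is a set of $p^{j+1}-1-r_j$ admissible values of $w$; and for each such $w$ the remaining constraint $t \le n$ reads $s p^{j+1} \le q_j p^{j+1} + (r_j - w)$ with $r_j - w < 0$, forcing $s \le q_j - 1$. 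Conversely any $s$ in $\{0,1,\dots,q_j-1\}$ gives $t \le (q_j-1)p^{j+1} + (p^{j+1}-1) < n$, so there are exactly $q_j$ admissible values of $s$ for each admissible $w$ (and none at all once $q_j = 0$, i.e. for $j \ge k$, consistent with $c_j(n) = 0$ there).

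Multiplying the two counts gives $c_j(n) = \sum_{t=0}^n c_j(n,t) = (p^{j+1}-1-r_j)\, q_j$. Substituting the base-$p$ digit expressions for $r_j$ and $q_j$ produces \eqref{cjdef1}, and substituting $r_j = p^{j+1}\{n/p^{j+1}\}$ together with $q_j = n/p^{j+1} - \{n/p^{j+1}\}$ produces \eqref{cjdef2}. I expect the only step needing genuine care to be the boundary analysis of the constraint $t \le n$: one must verify that the strict inequality $w > r_j$ is precisely what rules out $s = q_j$ while leaving all smaller values of $s$ available, so that the number of admissible $s$ is the same for every admissible $w$ and equals $q_j$ exactly. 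Once that is checked, the rest is routine bookkeeping with the finite geometric sums defining $q_j$ and $r_j$.
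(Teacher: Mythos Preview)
Your proposal is correct and follows essentially the same counting argument as the paper: both identify that $c_j(n,t)=1$ precisely when the bottom $j{+}1$ digits of $t$ exceed those of $n$ (i.e.\ $t \bmod p^{j+1} > n \bmod p^{j+1}$), then count admissible low parts $w$ and high parts $s$ separately and multiply. The paper frames this as an induction on $j$, but the ``induction step'' never actually invokes the hypothesis, so it is really the same direct count you give; your version is slightly cleaner in that you explicitly derive the carry criterion from the addition $t+(n-t)=n$ rather than asserting it, and you handle the boundary constraint $t\le n$ more carefully.
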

\begin{proof}
(1) Denote $t$ in base $p$ as $t=t_kp^k+t_{k-1}p^{k-1}+\dots+t_0$ and note that since $t\le n-1$, $t_k\le a_k$. 
We prove the result by induction on $j \ge 0$. 
For the base case $j=0$  whenever $t_0>a_0$ then that value of $t$ contributes 1 towards $c_0(n)$. 
The number of values that $t_0$ can take while being greater than $a_0$  is $p-1-a_0$. 
The number of values $t$ can take with $t_0>a_0$ for a fixed $t_0$ is $(n-a_0)/p$ and so we have 
\[
c_0(n)=(p-1-a_0)\frac{(n-a_0)}{p}=(p^{0+1}-1-a_0p^0)\left(\sum_{t=1}^ka_tp^{t-1}\right).
\]

For the  induction step we observe that 
a value $t$ contributes to $c_j(n)$ only if its $j$ smallest base $p$ digits satisfy:
 \begin{equation}\label{cond1}
 \sum_{v=0}^{j} t_vp^v>\sum_{v=0}^ja_vp^v.
 \end{equation}
 The number of last $j$ digits that would contribute to $c_j(n)$ is consequently
 \begin{equation}\label{cond2}
p^{j+1}-1-\sum_{v=0}^ja_vp^v.
  \end{equation}
  The number of $t$ that would satisfy \eqref{cond2} is therefore:
 \begin{equation}\label{cond3}
 \frac{n-\sum_{v=0}^ja_vp^v}{p^{j+1}}=\sum_{t=j+1}^ka_tp^{t-j-1}.
 \end{equation}
 And so we obtain the value of $c_j(n)$ by taking the product of\eqref{cond2} and \eqref{cond3}
 $$
 c_j(n)=\left((p^{j+1}-1)-\sum_{\uu=0}^ja_{\uu}p^{\uu}\right)\left(\sum^k_{t=j+1}a_tp^{t-j-1}\right).
$$

(2) The formula \eqref{cjdef2} follows by   rewriting the   sums in \eqref{cjdef1}, observing that 
\[
\sum_{t=j+1}^k a_tp^{t-j-1}=\left[\frac{n}{p^j}\right]= \frac{n}{p^j}-\left\{\frac{n}{p^j}\right\}
\] 
and
\[
p^j-1-\sum_{v=0}^j a_v p^v =p^j-1-p^j\left\{\frac{n}{p^j}\right\},
\]
as required.
\end{proof}

We apply Lemma \ref{cjdef} to prove our first formula for $\ord_p(\G_n)$.


\begin{proof}[Proof of Theorem \ref{opgn}.]

Substituting in \eqref{carries} the formula of  Lemma \ref{cjdef} yields
\begin{equation*}
\ord_p(\G_n) =\sum_{j=0}^{k-1} c_j(n)
 =\sum_{j=0}^{k-1}\left(\left((p^{j+1}-1)-\sum_{\uu=0}^ja_{\uu}p^{\uu}\right)\left(\sum^k_{t=j+1}a_tp^{t-j-1}\right)\right) 
\end{equation*}
Expanding the latter sum yields

\begin{equation}\label{ppart1}
\ord_p(\G_n)
=\sum_{j=0}^{k-1}\sum_{t=j+1}^ka_tp^t-\sum_{j=0}^{k-1}\sum_{t=j+1}^ka_tp^{t-j-1}-\sum_{j=0}^{k-1}\sum_{\uu=0}^{j}\sum_{t=j+1}^ka_{\uu}a_tp^{\uu+t-j-1}
\end{equation}
Now we simplify the three sums in  \eqref{ppart1} individually.
The first of these sums is 
\begin{equation}\label{ppart2}
\sum_{j=0}^{k-1}\sum_{t=j+1}^ka_tp^t =\sum_{j=1}^kja_jp^j.
\end{equation}
This sum extends to a $p$-adically convergent series,
which for $\alpha := \sum_{j=0}^{\infty} a_j p^j$ is
$f(\alpha) = \sum_{j=0}^{\infty} j a_j p^j$. In fact   $f: \ZZ_p \to \ZZ_p$
is not only a continuous function, but is   a $p$-adic  analytic function on $\ZZ_p$.

The second sum of \eqref{ppart1} can be re-written as:
\begin{equation}\label{ppart3}
\sum_{j=0}^{k-1}\sum_{t=j+1}^ka_tp^{t-j-1}
=\sum_{j=1}^ka_j\left(p^{j-1}+p^{j-2}+\dots+1\right)=\sum_{j=1}^ka_j\left(\frac{p^j-1}{p-1}\right)
\end{equation}

The third sum is a   bilinear sum, which satisfies the identity
\begin{equation}\label{ppart4}
\sum_{j=0}^{k-1}\sum_{b=0}^{j}\sum_{t=j+1}^ka_ba_tp^{b+t-j-1}=
\sum_{j=0}^k\frac{1}{p^{j+1}} \left(\sum_{\uu=0}^{j} a_{\uu}p^{\uu}\right) \left(\sum_{v=j+1}^ka_vp^{v}\right).
\end{equation}
By substituting  \eqref{ppart4}, \eqref{ppart2} and  \eqref{ppart3} into  \eqref{ppart1} we obtain the desired result.
\end{proof}

\begin{rem}\label{rem34}
 The total carry functions $c_j(n)$  seem of  interest in their own right.
 Bergelson and Leibman \cite{BL07}  study the class of all bounded functions 
 obtainable as finite  iterated combinations of the fractional part function $\{\cdot  \}$,
 calling them {\em generalized polynomials}. 
 They relate  generalized polynomials  to piecewise polynomial maps
on nilmanifolds and use this relation to  derive recurrence and distribution properties of 
values of such functions. The formula in Lemma \ref{cjdef}(2)
involves such functions. It  can be written as $c_j(n) = n g_{1,j}(n) + g_{2,j}(n)$
where $g_{i,j}(n)$ are the  bounded generalized polynomials  
$$
g_{1,j}(n) := 1 - \frac{1}{p^{j+1}} - \{\frac{n}{p^{j+1}}\},
$$
and 
$$
g_{2,j}(n) := - g_1(n) \big( p^{j+1}  \left\{\frac{n}{p^{j+1}}\right\} \big).
$$
each  of which is a  periodic function of $n$  with period $p^{j+1}$. 
However the function $c_j(n)$ 
itself is unbounded, so is not a generalized polynomial.
\end{rem}

%
%
%

\subsection{Prime-power divisibility of binomial  products $\G_n$: extreme values}\label{sec32}

Theorem \ref{opgn} permits an exact determination of the extreme behaviors of $\ord_p(\G_n)$.
We obtain a useful  upper bound on $\ord_p(\G_n)$ by retaining only those terms in Theorem \ref{opgn}
that are linear in the $a_i$, and this upper bound turns out to be sharp.

\begin{thm}\label{ordpgapk-1}
Let the prime $p$ be fixed.
 Then we have for all $n >0$ that
\begin{equation}\label{opgapk-1}
0 \le \ord_p(\G_n) \le M_p(n) := \sum_{j=0}^{k} ja_j p^j - \sum_{j=1}^{k} a_j\left(\frac{p^j-1}{p-1}\right).
\end{equation}
The equality 
$\ord_p(\G_n)=0$ 
holds if and only if $n = a p^k -1$,
with $1 \le a  \le p-1$.
The equality $\ord_p(\G_n)= M_p(n)$ holds if and only if $n=a p^k$ and
in that case
$$
\ord_p(\G_{ap^k})=a\left(kp^k-\frac{p^k-1}{p-1}\right).
$$
\end{thm}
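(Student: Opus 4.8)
The plan is to work directly from the bilinear formula \eqref{ordgndef2} of Theorem \ref{opgn}, writing $n = \sum_{j=0}^k a_j p^j$ with $a_k \neq 0$. First I would observe that the upper bound $\ord_p(\G_n) \le M_p(n)$ is immediate: the right side of \eqref{ordgndef2} is $M_p(n)$ minus the bilinear double sum $\sum_{j=0}^k p^{-(j+1)} \big(\sum_{u=0}^j a_u p^u\big)\big(\sum_{v=j+1}^k a_v p^v\big)$, and every term of that double sum is a product of nonnegative quantities, hence nonnegative. So the content of the theorem is to characterize when this bilinear ``defect'' vanishes, and separately when $\ord_p(\G_n)$ itself vanishes.

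For the equality $\ord_p(\G_n) = M_p(n)$: the bilinear sum is a sum of nonnegative terms, so it is zero if and only if each term is zero. The $j$-th term is $p^{-(j+1)}(\sum_{u=0}^j a_u p^u)(\sum_{v=j+1}^k a_v p^v)$; the left factor $\sum_{u=0}^j a_u p^u$ is the ``low part'' of $n$ below position $j+1$ and the right factor is the ``high part.'' I would argue that the term for $j = k-1$ is $p^{-k}(\sum_{u=0}^{k-1} a_u p^u)(a_k p^k) = a_k \sum_{u=0}^{k-1} a_u p^u$, which is zero iff $a_0 = a_1 = \cdots = a_{k-1} = 0$, i.e. $n = a_k p^k$. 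Conversely if $n = a_k p^k$ then every low part $\sum_{u=0}^j a_u p^u$ is zero for $j \le k-1$, so the whole bilinear sum vanishes. For such $n = a p^k$ the remaining terms in $M_p(n)$ give $\ord_p(\G_{a p^k}) = k a p^k - a\frac{p^k-1}{p-1} = a\big(k p^k - \frac{p^k-1}{p-1}\big)$, as claimed — this is just evaluating $M_p$ at a one-digit numeral.

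For the equality $\ord_p(\G_n) = 0$: the cleanest route is the carry interpretation. By \eqref{carries}, $\ord_p(\G_n) = \sum_{j=0}^{k-1} c_j(n)$ with each $c_j(n) \ge 0$, so $\ord_p(\G_n) = 0$ iff $c_j(n) = 0$ for all $0 \le j \le k-1$. Using Lemma \ref{cjdef}(1), $c_j(n) = \big((p^{j+1}-1) - \sum_{u=0}^j a_u p^u\big)\big(\sum_{t=j+1}^k a_t p^{t-j-1}\big)$. The right factor is positive for every $j \le k-1$ because $a_k \neq 0$; hence $c_j(n) = 0$ forces the left factor to vanish, i.e. $\sum_{u=0}^j a_u p^u = p^{j+1} - 1$, which means $a_0 = a_1 = \cdots = a_j = p-1$. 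Applying this for $j = 0, 1, \dots, k-1$ gives $a_0 = \cdots = a_{k-1} = p-1$, so $n = a_k p^k + (p^{k}-1) = (a_k+1)p^k - 1$; writing $a = a_k + 1$ we get $n = a p^k - 1$ with $2 \le a \le p$, equivalently (allowing the degenerate case $k=0$ where $n$ has only digit $a_0 \in\{1,\dots,p-1\}$ and trivially $\ord_p(\G_n)=0$, or absorbing $a=p$ into a carry) $n = a p^k - 1$ with $1 \le a \le p-1$ in the normalized statement. Conversely, if $n = a p^k - 1$ with $1 \le a \le p-1$, its base-$p$ digits are $a_0 = \cdots = a_{k-1} = p-1$ and $a_k = a-1$ (or $n$ is a single digit when $k=0$), so every $c_j(n)$ has vanishing left factor and $\ord_p(\G_n) = 0$.

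The main obstacle I anticipate is purely bookkeeping: being careful about the boundary/degenerate cases — $k = 0$ (single-digit $n$), and the normalization of the ``$n = ap^k - 1$'' parametrization so that the leading digit constraint $1 \le a \le p-1$ comes out exactly as stated rather than as $2 \le a \le p$. One must also double-check that $c_j(n)$ (equivalently the bilinear defect) is genuinely a sum over $0 \le j \le k-1$ with all higher terms identically zero, which is already recorded after \eqref{carries}. No new ideas beyond the formulas \eqref{ordgndef2}, \eqref{carries}, and Lemma \ref{cjdef} are needed; the proof is a direct extraction of when a sum of manifestly nonnegative terms degenerates.
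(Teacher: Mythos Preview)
Your proposal is correct and follows essentially the same approach as the paper: the upper bound and its equality case are handled identically (drop the nonnegative bilinear term from Theorem~\ref{opgn} and note it vanishes iff $n$ has a single nonzero digit), and for the $\ord_p(\G_n)=0$ case you use the carry decomposition $\sum_j c_j(n)$ with Lemma~\ref{cjdef}(1), which is just a slightly more formal version of the paper's direct appeal to Kummer's theorem. Your explicit factorization argument for $c_j(n)=0$ is in fact cleaner than the paper's verbal argument, and your caveat about the $2\le a\le p$ versus $1\le a\le p-1$ normalization (absorbing $a=p$ as $a=1$ at the next exponent) is exactly the bookkeeping one needs.
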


\begin{proof}
The lower bound in \eqref{ordpgapk-1}  is immediate since  $\G_n$ is an integer.
The case of equality can be deduced directly from Kummer's theorem.
To have $\ord_p(\G_n) = 0$ using Kummer's theorem, all binomial
coefficients $\binom{n}{j}$ must be prime to $p$,
so there can be no value $0 \le j\le n$ such that subtracting $j$ from $n$
in base $p$ arithmetic results in  borrowing a digit. This fact requires that the  base $p$ digits $a_j$ of $n$  except
the top digit be $p-1$, i.e. $a_j= p-1$ for 
$0 \le j \le k-1$. There is no constraint on the top digit $a_k$ other than $a_k \ne 0$, since no borrowing
can occur in this digit.

The upper bound inequality $\ord_p(\G_n) \le M_p(n)$ follows immediately from Theorem \ref{opgn}.
The equality case will hold only if the bilinear term in that theorem vanishes; it is 
$$
\sum_{j=1}^{k} \frac{1}{p^j} \Big(\sum_{\uu=0}^{j}  a_{\uu}p^{\uu}\Big) \Big( \sum_{v=j+1}^ka_vp^{v}\Big).
$$
 This term will be positive whenever  two nonzero coefficients appear in the base $p$ expansion
of $n$, since one can find a nonzero cross term in this expression. Therefore equality can hold
only for those $n$ having one nonzero base $p$  digit, i.e. $n= a p^k$ with $a \ne 0$.
Direct substitution of $a_j=0$ for $0 \le j \le p-1$ yields the explicit formula for $\ord_p(\G_n)$ above.
\end{proof}

The previous result implies the following  bounds.
\begin{thm}\label{th29}
For each prime $p$, there holds for all $n \ge 1$, 
\begin{equation}\label{eq216}
0 \le \ord_p(\G_n) < n \log_p n.
\end{equation}
The value $n=p^k$ has  $\ord_p(\G_n) \ge n (\log_p n - 1)$.
\end{thm}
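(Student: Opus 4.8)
The plan is to derive both statements of Theorem~\ref{th29} directly from the upper bound $M_p(n)$ of Theorem~\ref{ordpgapk-1}. For the strict inequality $\ord_p(\G_n) < n\log_p n$, I would start from
$$
\ord_p(\G_n) \le M_p(n) = \sum_{j=0}^k j a_j p^j - \sum_{j=1}^k a_j\Big(\frac{p^j-1}{p-1}\Big),
$$
discard the (nonnegative) subtracted sum, and bound the main sum termwise. Since $j \le k = \lfloor \log_p n\rfloor \le \log_p n$ for each index with $a_j \ne 0$, one gets $\sum_{j=0}^k j a_j p^j \le (\log_p n)\sum_{j=0}^k a_j p^j = n\log_p n$. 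This gives $\le$; to get strict inequality I would argue that the bound is not attained: either the subtracted sum $\sum_{j=1}^k a_j\frac{p^j-1}{p-1}$ is strictly positive (which happens unless all of $a_1,\dots,a_k$ vanish, i.e. $n < p$), or, in the remaining case $n < p$, we have $k=0$ so $M_p(n)=0 < n\log_p n$ (as $\log_p n > 0$ for $n \ge 2$; the case $n=1$ gives $0 < 0$ false, so one must note $\ord_p(\G_1)=0$ and handle $n=1$ by the convention that the claim $0 \le \ord_p(\G_1) < 1\cdot\log_p 1 = 0$ should be read as $0 \le 0$, or simply restrict the strict inequality to $n \ge 2$). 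I would also double-check that when some $a_j=p-1$ terms make $j a_j p^j$ large, the crude bound $j \le \log_p n$ still has slack because $k < \log_p n$ strictly whenever $n$ is not a power of $p$, and when $n = p^k$ the subtracted term $\frac{p^k-1}{p-1} > 0$ provides the slack.

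For the second assertion, I would simply substitute $n = p^k$ (so $a_k = 1$, all other digits zero) into the exact formula at the end of Theorem~\ref{ordpgapk-1} with $a=1$:
$$
\ord_p(\G_{p^k}) = k p^k - \frac{p^k-1}{p-1}.
$$
Then I would estimate $\frac{p^k-1}{p-1} \le \frac{p^k}{p-1} \le p^k$ (using $p \ge 2$, so $p - 1 \ge 1$), hence $\ord_p(\G_{p^k}) \ge k p^k - p^k = p^k(k-1) = n(\log_p n - 1)$, since $\log_p n = k$. This is exactly the claimed lower bound.

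The only mild obstacle I anticipate is bookkeeping around the boundary cases — making the strict inequality genuinely strict for \emph{all} $n \ge 1$ (or deciding to state it for $n \ge 2$), and confirming that the "slack" in the termwise bound $\sum j a_j p^j \le (\log_p n) n$ is never entirely used up simultaneously with the subtracted sum vanishing. The cleanest way to organize this is a short case split: (a) $n = p^k$ for some $k \ge 1$, where slack comes from $\frac{p^k-1}{p-1}>0$; (b) $n$ has at least two nonzero base-$p$ digits, where the top digit index $k$ satisfies $k < \log_p n$ strictly and moreover $\frac{p^j-1}{p-1}>0$ for the lower nonzero digits; (c) $1 \le n \le p-1$, handled directly. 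No step requires more than elementary estimates, so I do not expect any real difficulty beyond this case analysis.
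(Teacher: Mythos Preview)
Your proposal is correct and follows essentially the same route as the paper: bound $\ord_p(\G_n)$ by $\sum_{j} j a_j p^j$ via Theorem~\ref{ordpgapk-1}/\ref{opgn}, then bound this sum by $kn \le n\log_p n$, and for $n=p^k$ plug into the exact formula and use $\frac{p^k-1}{p-1}\le p^k$. The paper compresses the first step with the identity $\sum_{j=1}^k j a_j p^j = kn - \sum_{u=0}^{k-1}(k-u)a_u p^u \le kn$, but this is exactly your termwise bound $j\le k$; your more careful case analysis for strictness (including the $n=1$ boundary) is in fact more scrupulous than the paper's one-line chain.
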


\begin{proof}
Only the upper bound in \eqref{eq216} needs to be verified. By Theorem \ref{opgn} we have
\[
\ord_p(\G_n)  < \sum_{j=1}^{k} ja_jp^j= kn-\sum_{\uu=0}^{k-1} (k-\uu)a_{\uu}p^{\uu} \le k n =  n \lfloor \log_p n \rfloor \le n \log_p n.
\]
For $n=p^k$ we have $\ord_p(\G_n) = kn - (1+ p + \cdots +p^{k-1}) \ge kn -n= n (\log_p n -1).$
\end{proof}

%
%

\section{ Formulas for $\ord_p(\G_n)$: Comparison and Implications}\label{sec7}

 We  presented three formulas for $\ord_p(\G_n)$ in Sections \ref{sec3}, \ref{sec4} and \ref{sec5}, respectively.
 Now we compare the formulas and discuss what they imply about features of the graph of $\W_2(\G_n) = \ord_2(\G_n)$ given
 in Figure \ref{fig21-ord2}
 and in the rescaled  Figure  \ref{fig22-ord2nlogn} following.

Each of the three formulas express $\ord_p(\G_n)$ as a difference of positive terms $\SP_{p, j}^{+}(n)$ and  $\SP_{p,j}^{-}(n)$,
for $j=1,2,3$.
The  term $\SP_{p, j}^{+}(n)$ is nondecreasing in $n$ in each formula, while  
the term $\SP_{p, j}^{-}(n)$ is smooth for $j=1$ but is oscillatory  in the other two formulas.\bigskip


%
%

\begin{minipage}{\linewidth}
\begin{center}
\begin{tabular}{| r ||r | r ||  r | r || r | r || c |}
\hline
 $n$ &  $\ord_2(D_n^{\ast})$  & $\ord_2(N_n^{\ast})$
  & ${\SP}_{2,2}^{+}(n)$  &  ${\SP}_{2,2}^{-}(n)$  & $\SP_{2,3}^{+}(n)$ & $\SP_{2,3}^{-}(n)$ & $\ord_2(\G_n)$  \\
\hline
$1$   &$0$ & $0$ & $0$  & $0$  &  $0$   &  $0$  &   $~0$\\
\hline
$2$   &$2$ & $1$ & $2$  & $1$  &  $2$   &  $1$ &   $~1$  \\
$3$   & $2$ & $2$  & $4$   &  $4$ &  $2$  & $2$  &  $~0$  \\
\hline
$4$   &10 &5 & $8$  & $3$  &  $8$   &  $3$  &   $~5$\\
$5$   &10 &8 &  $10$   &  $8$  &  $8$  & $6$  &  $~2$\\
$6$  &16 &12 &   $14$  &  $10$   &  $10$ & $6$  & $~4$  \\
$7$  &16 &16 &  $18$   &  $18$  &  $10$  & $10$  &  $~0$ \\
\hline
$8$  &40 &23 & $24$  & $7$  &  $24$   &  $7$   &   $17$\\
$9$  &40 &30 &   $26$   &  $16$  &  $24$  & $14$  & $10$ \\
$10$ &50 &38 &   $30$   &  $18$   &  $26$  & $14$  & $12$ \\
$11$   &50 &46 &   $34$ &   $30$ &$26$  &  $22$   & $~4$ \\
$12$  &74 &56 &  $40$   &  $22$ &  $32$  & $14$  &  $18$  \\
$13$  &74 &66 &  $44$   &  $36$  &  $32$  & $24$  &  $~8$ \\
$14$  &88 &77 &   $50$   &  $39$  &   $34$  & $23$  &$11$ \\
$15$  &88 &88 &   $56$   &  $56$ &  $34$  & $34$  & $~0$  \\
\hline
$16$   &$152$ & $103$ & $64$  & $15$  &  $64$   &  $15$  &   $49$\\

\hline
\end{tabular} \par
\bigskip
\hskip 0.5in {\rm TABLE 7.1.}  
{ Comparison of Theorems \ref{th31a},  \ref{th39} and \ref{opgn}  for $\ord_2(\G_n)$, $1 \le n \le 16$, divided in 
blocks $2^k \le n < 2^{k+1}$.}
\newline
\newline
\end{center}
\end{minipage}

  Table 7.1   presents numerical data on the three formulas for $p=2$ and small $n$, 
which illustrate their differences.
The lead term $\SP_{p,1}^{+}(n)= \ord_p(D_n^{\ast})$ in the first formula grows much more rapidly than  
the corresponding terms $\SP_{p, j}^{+}(n)$
for $j=2, 3$, 
evidenced by the asymptotic formula in Theorem \ref{th32a}.
The second and third formulas differ qualitatively in their second terms, 
in  that $\SP_{p,2}^{+}(n) := \frac{2}{p-1} S_p(n)$
 grows smoothly, being of size $\,\,n \log_p n + O(n)$, while $S_{p,3}^{+}(n)$ 
 grows less smoothly, having occasional jumps proportional to  $n \log n$ as the base $p$ digits $a_i$ vary.
 It appears  that the third formula gives the best upper bound of the three formulas, in the sense of minimal growth of  the positive term
 $S_{2, 3}^{+}(n)$ among the $S_{2, j}^{+}(n)$.
 In particular  $\SP_{2,2}^{+} (n) \ge S_{2,3}^{+}(n)$ holds in the table entries,  with equality holding for $n=2^k$.

We have shown that the function $\ord_p(\G_n)$ is of average size about $\frac{1}{2} n \log_p n$
and of size at most $n \log_p n$.  It is natural to make a companion plot to Figure \ref{fig21-ord2}
that rescales the values of $\ord_2(\G_n)$ by a factor $\frac{1}{2} n \log_2 n$, which we present in 
Figure \ref{fig22-ord2nlogn} below.
In the rescaled plot all values fall   in the interval $[0, 2]$, and have average size
around $1$, according to the discussion of $d_p(n)$ in feature (iii)  after Theorem \ref{th39}.


\begin{figure}[!htb]
\includegraphics[width=130mm]
 {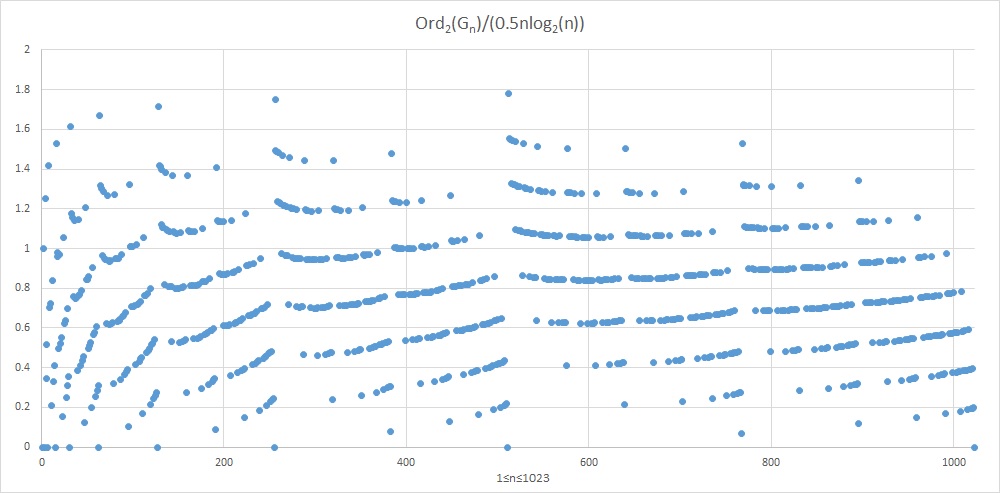}
\caption{Values of $\ord_2(\G_n)/(0.5 \, n \log_2 n)$, $1 \le n \le 1023=2^{10}-1$.}
\label{fig22-ord2nlogn}
\end{figure}

There are  several patterns visible  in 
  Figure \ref{fig21-ord2} 
  and in 
  Figure \ref{fig22-ord2nlogn}
above.
\begin{enumerate}
\item[(i)]
Between $n=2^k -1$ to  $n=2^k$ there is a  large jump visible in the value of $\ord_2(\G_n)$. 
The large  jump between $n=2^{k-1}$ and $n=2^k$ is 
quantified  more generally in Theorem \ref{ordpgapk-1}
for $\ord_p(G_n)$ between $n=p^{k} -1$ and $n= p^k$. 

\item[(ii)]
In Figure \ref{fig21-ord2} the values of $\ord_2(\G_n)$ 
between $n=2^{k}$ to  $n=2^{k+1}$ 
show a pattern  of diagonal lines 
or ``stripes".  These  diagonal lines are sloping upwards, have different lengths, and are roughly parallel to each other. 
The lengths of the ``stripes" vary in a predictable manner: from top to bottom, they first increase in length,
starting from the left, until they extend nearly  to the next level $2^{k+1}$,
remain stable at this width for  a while,  and thereafter decrease in length while
continuing to end near the next level $2^{k+1}$.
In the rescaled Figure \ref{fig22-ord2nlogn} these lines flatten out to give parallel ``stripes".

 Theorem \ref{th39} accounts for the ``stripes" visible in Figure \ref{fig22-ord2nlogn} via its term $-\frac{n-1}{p-1}\dgt_p(n)$,
 which shows that each stripe is  occupied by integers having a fixed value $d_p(n)= j$, in which  $d_p(n)=1$ labels the highest stripe,
 and each stripe downward increases $j$ by one.
 The odometer behavior of the function $d_p(n)$ also
 accounts for the horizontal width of the  ``stripes" and the motion of their  behavior over the interval $[p^k, p^{k+1}-1]$,
 and determines when they start near the left endpoint $p^k$ (small values of $d_p(n)$) or end near the 
 right endpoint $p^{k+1} -1$ (large values of $d_p(n)$). 
\item[(iii)]
On comparing ``stripes" in the interval 
 $n=2^k$ and $n=2^{k+1}-1$, with those at the next interval between $n=2^{k+1}$ and $n=2^{k+2}-1$, 
 the number of ``stripes"  increases by $1$.  
 This  increase in the number of stripes from the interval from $[p^k, p^{k+1}-1]$
 and the interval $[p^{k+1}, p^{k+2} -1]$ is accounted for by the allowed values of
 $d_p(n)$ labeling the given interval.
For $p=2$ there are exactly $k+1$ such stripes on the region $2^k \le n \le 2^{k+1} -1$,
there is an increase of exactly one stripe in the new interval,
and the spacing between the stripes is of width approximately $\frac{1}{k} \approx \frac{1}{\log_p n}$.
For a general prime $p$,  the increase in the number of stripes between the interval $[p^k, p^{k+1-1}]$
and $[p^{k+1}, p^{k+2}-1]$ is exactly $p-1$.

\item[(iv)]
The  values   between successive powers $2^k$ and $2^{k+1}$ have an apparent envelope
of largest growth. The envelope  appears to be of size proportional to $k 2^k$, a value approximately equal to $n \log_2 n$.
Under the rescaling by a factor proportional to $n \log n$, as is done in Figure \ref{fig22-ord2nlogn}, the ``stripes"  become
approximately flat, and they fall in the range $0 \le \ord_2(\G_n)/ (0.5\, n \log_2 n) \le 2.$

The envelope of largest growth for $d_p(n)$ is $n \log_p(n)-n$, as quantified in
Theorem \ref{th29}. On rescaling by a factor $1/ n \log_p n$, as done in Figure  \ref{fig22-ord2nlogn}
the  ``stripe"  of values with  $d_p(n)=j$ 
over the interval  $[p^n, p^{n+1} -1]$ becomes approximately flat.
Note that  the smooth main term $\frac{2}{p-1} S_p(n)$ in Theorem \ref{th39},
changes by at most $n$ over this  interval, which becomes after rescaling of  size
$O(\frac{1}{\log n})$ which is asymptotically 
negligible.
\item[(v)]
Inside the envelope of largest growth
are  irregular, correlated fluctuations in size consisting 
of  a structured set of  negative ``jumps" of various sizes. 
The  ``stripes" are a structure that indirectly emerges from correlations within the pattern of ``jumps".
We observe that the odometer behavior of $d_p(n)$ completely  accounts for the pattern of downward ``jumps" in the values of $\ord_p(\G_n)$,
The odometer  behavior also produces the self-similar structure in the  
locations of values $n$ with $d_p(n)=j$ which produce the ``stripes". 
\end{enumerate}

 A remaining
 mystery of the functions $\ord_p(\G_n)$ concerns the interpretation of their generalizations $\W_b(\G_n)$ introduced for all integers $b \ge 2$ 
 in Section \ref{sec52}  using base $b$ radix expansions.
One can view  $\G_n$  as a universal object that encodes all the
data $\W_p(n)= \ord_p(\G_n)$ for prime $p$. If so, what might be the universal
object encoding the data for all $\W_b(\G_n)$, $b \ge 1$?


\section{Binomial Products and Prime Counting Estimates}\label{sec7b}

Binomial coefficients are well known to encode 
 information about the distribution
of prime numbers. 
This holds more generally for { integer factorial ratios},
which are one-parameter families of ratios of products of factorials that are integers
for all parameter values. Let $\pi(x)$ count the number of primes $p \le x$.
Chebyshev \cite{Cheb1852} 
 used the  integer factorial ratios   $A_n :=\frac{(30n)! n!} {(15n)! (10n)! (6n)!}$
to obtain his bounds 
$$
0.92 \,\frac{x}{\log x} \le \pi(x) \le 1.11 \frac{x}{\log x},
$$
It  is known that the in principle the ensemble of integer factorial ratios contain
enough information  to  give a proof
of the prime number theorem, see Diamond and Erd\H{o}s \cite{DE80}. 
However their  method to show the existence of a suitable sequence of such ratios
 used the prime number theorem as an input, so did not yield
 an elementary proof of the prime number theorem, see the discussion in Diamond \cite[Sect. 9]{Di82}.

We now consider the restricted set of all products of binomial coefficients.
The asymptotic formula for $\log(\G_n)$ in Theorem \ref{th22b} gives
$\log(\G_n) = \frac{1}{2} n^2 + O( n \log n)$. 
The prime number theorem says that $\pi(n) \sim \frac{n}{\log n}$ and is therefore
equivalent to the assertion
\begin{equation}\label{PNT1}
\log (\G_n) = \frac{1}{2} \pi(n) \,n \log n + o(n^2).
\end{equation}
The  individual binomial products $\G_n$ 
imply   Chebyshev-type bounds for
$\pi(x)$. 
The product formula expressing unique factorization (\cite{AW45}) yields on
taking a logarithm
\begin{equation} \label{upper}
\log (\G_n) = \sum_{p \le n} \ord_p(\G_n)\log p.
\end{equation}
The left side is estimated by the  asymptotic formula for $\log(G_n)$ in Theorem \ref{th22b},
$$
\log(\G_n) = \frac{1}{2} n^2 - \frac{1}{2} n \log n +(1- \frac{1}{2} \log (2 \pi))n + O(\log n).
$$
Since $1- \frac{1}{2} \log (2 \pi)>0$, 
one has for sufficiently large $n$, and in fact for all $n \ge 3$, 

\begin{equation}\label{lower}
\log(\G_n) \ge  \frac{1}{2} n^2 - \frac{1}{2} n \log n.
\end{equation}

Theorem \ref{th29} states  $\ord_p(\G_n) \le n \log_p(n)$, which upper bounds the right side of \eqref{PNT1} by
 \begin{equation}\label{upper}
\log(\G_n) = \sum_{p \le n} \ord_p(\G_n)\log p \le \sum_{p \le n} (n \log_p n) \log p =  \pi(n)\, n \log n.
\end{equation}
Combining these two inequalities yields the Chebyshev-type lower bound, 
$$
\pi(n) \ge \frac{1}{2} (\frac{n}{\log n}) - \frac{1}{2},
$$
 valid  for  all $n \ge 2$. This bound loses  a constant factor of $2$ compared to the prime number theorem,
so is much worse than Chebyshev's lower bound. But it  has a  redeeming feature:
{\em in an average sense most  $\ord_p(\G_n)$ are of size near $\frac{1}{2} n \log_p n.$}
This  observation follows from Theorem \ref{th39}, combined with  the fact that $d_p(n)$ has mean  $\frac{p-1}{2}\log_p n$,
provided  that one averages over $n$.  
This (heuristic) observation would save  back  exactly the  factor of $2$ lost in the argument above on the right side of \eqref{upper},
which therefore suggests the possibility of  an approach to proving
 the  prime number theorem\footnote{The lower  bound $\pi(x) \ge \frac{x}{\log x} + o(\frac{x}{\log x})$ is  known to be equivalent
to  the full prime number theorem} via radix expansions.

There remain serious obstacles to obtaining  a proof of the prime number theorem along these lines. If 
one holds $p$ fixed and varies $n$, then one can rigorously show
that $\ord_p(\G_n)$ is usually of size near $\frac{1}{2} n \log_p n$.
However the  sum $ \sum_{p \le n} \ord_p(\G_n)\log p$ appearing  in \eqref{upper} makes a different averaging:
{\em holding  $n$ fixed and letting $p$ vary, restricting to  $p \le n$.}  The analysis of this new averaging
leads to new kinds of  arithmetical sums involving radix expansions
and we  leave their investigation  to future work.

Another relation of binomial products $\G_n$ to
the distribution of prime numbers arises via their connection to products
of Farey fractions. This connection via M\"{o}bius inversion creates other products
of binomial coefficients which may be directly related to the Riemann hypothesis,
for which see \cite{LM14r}.
Moreover, there are  general relations known between families of
integer factorial ratios and  the Riemann hypothesis.
For  some recent  work on their structure,  see  Bell and Bober \cite{BB09}
and Bober \cite{Bo09}.

%
%
%
\section{Appendix A: Asymptotic expansions for $\log(D_n^{\ast}), \log(N_n^{\ast})$ and $\log(\G_n$)}\label{secA}

In this appendix we derive full asymptotic expansions for 
the logarithms of the {\em superfactorial function} $N_n^{\ast}=\prod_{k=1}^n  k!$, the {\em hyperfactorial function}
$D_n^{\ast}=\prod_{k=1}^n  k^k$
 and the binomial products $\G_n= D_n^{\ast}/ N_n^{\ast}$.

We start from the formulas
\begin{eqnarray}\label{200c}
N_n^{\ast} &= &\Gamma(n+1) G(n+1), \\
&&\nonumber \\
D_n^{\ast} &= & \frac{ \Gamma(n+1)^n}{G(n+1)}, \label{200d}
\end{eqnarray}
in which 
$\Gamma(n)$ denotes the Gamma function
and  $G(n)$ denotes  the Barnes $G$-function, both discussed below.
The formulas \eqref{200c} and \eqref{200d} follow from the standard
identities $\Gamma(n+1) = n!$ and $G(n+1) = 1! 2! \cdots (n-1)!$, respectively.
These two formulas yield
\begin{equation}\label{gndef2}
\G_n =\frac{D_n^{\ast}}{N_n^{\ast}}=\frac{\Gamma(n+1)^{n-1}}{G(n+1)^2}.
\end{equation}

The {\em Gamma function} $\Gamma(z)$ was originally defined to interpolate the factorial function,
and was studied at length by Euler (see \cite[Sect. 2.3]{Lag13}, and Artin \cite{Art64}).
It  satisfies a functional equation
$\Gamma(z+1) = z\Gamma(z)$, and has $\Gamma(1)=1$, which yields $\Gamma (n+1) = n!.$
Its reciprocal is an entire function of order $1$ (and maximal type) defined by the everywhere convergent Hadamard product
$$
\frac{1}{\Gamma(z)} = e^{\gamma z} z \prod_{k=1}^{\infty} \big( 1 + \frac{z}{k}\big) e^{- \frac{z}{k}}, 
$$
in which $\gamma \approx 0.57721$ denotes Euler's constant.
The  asymptotic expansion of the logarithm of the  Gamma function 
is related to Stirling's formula. It was determined by Stieltjes, who gave a precise notion of
asymptotic expansion (see \cite[Sect. 3.1, and (3.1.10)]{Lag13},  \cite[Chapter 5, (5.11.1)]{NIST}). It states\footnote{The
  asymptotic expansion of $\log \Gamma(z)$ is extremely similar  to that of 
 $\log \Gamma(z+1)$,  changing only one   term $\frac{1}{2}\log z$ to $- \frac{1}{2} \log z$,
via the identity $\log \Gamma(z+1 )= \log \Gamma(z) + \log z$.
The expansion of
$\log \Gamma(z)$ given in
Whittaker and Watson \cite[Sect. 12.33]{WW63} uses an older notation for Bernoulii numbers: their notation  $B_{k}$ corresponds
to $ |B_{2k}|$ in our notation.}
 for any fixed $N \ge 1$ that
\begin{equation}
\begin{aligned}
&\log \Gamma(z+1)=
z\log z -z + \frac{1}{2} \log z + \frac{1}{2} \log (2 \pi) \\&\quad\quad\quad + \sum_{k=1}^N 
 \frac{B_{2k}}{2k(2k-1)} \frac{1}{z^{2k-1}}+ O \big( \frac{1}{z^{2N+1}}\big),
\end{aligned}
\end{equation}
where $B_k$ denote the Bernoulli numbers, as 
determined by the generating function $ \frac{t}{e^t-1} = \sum_{k=0}^{\infty} B_k  \frac{t^k}{k!},$
in particular $B_1=-\frac{1}{2}$.
This formula   is known to be valid in any sector $- \pi + \epsilon < Arg(z) \le \pi - \epsilon$ of the complex plane, 
with the implied  $O$-constant depending on both  $N$ and $\epsilon$.

The  {\em Barnes $G$-function} was introduced  by Barnes \cite{Bar1900} in 1900.
It is a less well known than the Gamma function, 
and  is closely related to a generalization of the gamma function, the  double gamma function,
also introduced by Barnes  (\cite{Bar1899}, \cite{Bar1901}), see also \cite[Sect. 5.17]{NIST}.
It satisfies the functional equation
$$
G(z+1) = \Gamma(z) G(z), 
$$
and has $G(1)=1$, which yields $G(n+1) =(n-1)! (n-2)!\cdots 1!,$ and  also $G(n+2)= N_n^{\ast}$.
Recently the Barnes $G$-function has assumed  prominence
from its appearance in  formulas  relating  the Riemann
zeta function to random matrix theory.
These formulas appear  in random matrix theory for the Circular Unitary Ensemble,
and in conjectured formulas for  moments of the Riemann zeta function
on the critical line $Re(s)= \frac{1}{2}$, 
see Keating and Snaith \cite{KS00} and Hughes \cite{Hug03}.

The Barnes $G$-function 
is an entire function of order $2$ defined by the everywhere convergent  Weierstrass product
$$
G(z) := (2 \pi)^{\frac{z}{2}} \exp \big( -\frac{1}{2} ( z + z^2( 1 + \gamma))\big)\prod_{k=1}^{\infty} ( 1 + \frac{z}{k})^k \exp( \frac{z^2}{2k} - z ),
$$
where again $\gamma$ is Euler's constant.
The  asymptotic expansion  for  the Barnes $G$-function\footnote{Barnes \cite{Bar1900} follows a different  convention for
Bernoulli numbers: his $B_k$ corresponds to  $|B_{2k}|$ in the notation used here.
We have altered his formula accordingly.} (\cite[p. 285]{Bar1900}) has the form,
for any fixed $N \ge 1$, 
\begin{equation}\begin{aligned}
\log G(z+1) &= \frac{1}{2} \,z^2 \log z-\frac{3}{4}z^2 + \big( \frac{1}{2} \log(2\pi)\big) z-\frac{1}{12}\log z\\
&\quad +(\frac{1}{12}-\log A) + \sum_{k=1}^N
\frac{B_{2k+2}}{2k(2k+2)}\frac{1}{z^{2k}}+O\left(\frac{1}{z^{2N+2}}\right),
\end{aligned}\end{equation}
where $A=\exp \big( \frac{1}{12}- \zeta^{'}(-1)\big)$ is the Glasher-Kinkelin constant (Kinkelin \cite{Kin1860}, Glaisher \cite{Gla1878}, \cite{Gla1894}),  which
has a numerical value of $A\approx 1.2824271291\dots$ and
where  $B_k$  denote  the Bernoulli numbers. This asymptotic expansion is valid in any sector 
$- \pi + \epsilon < Arg(z) \le \pi - \epsilon$ of the complex plane, 
with the implied  $O$-constant depending on both  $N$ and $\epsilon$.
The original derivation of Barnes did not control the error term but Ferreira and L\"{o}pez \cite{FL01}
later obtained an asymptotic expansion\footnote{ Their expansion contains a term $z \log \Gamma(z+1)$
so the asymptotic expansion of $\log \Gamma(z+1)$ must be substituted
in their formula.}  with error term as above, see also  Ferreira \cite{Fe04} and Nemes \cite{Nem14}.

The asymptotic expansions  for both $\Gamma(z+1)$ and $G(z+1)$ when extended to all orders 
are  divergent series. That is, the  associated power series in  $w= \frac{1}{z}$, taking $N = \infty$, has radius of
convergence zero, a fact which follows from the super exponential  growth of the even Bernoulli numbers $|B_{2n}| \sim 4 \sqrt{ \pi n} (\frac{n}{\pi e})^{2n}$
as $n \to \infty$. 
The derived asymptotic expansions for $\log (D_n^{\ast})$, $\log(N_n^{\ast})$ and 
and $\log(\G_n)$ below also involve Bernoulli numbers and are also divergent series when extended to all orders.

We now state asymptotic expansions for $\log(D_n^{\ast})$ and $\log(N_n^{\ast})$.

\begin{thm}\label{th31b}
(1) The superfactorial $N_n^{\ast} = \prod_{k=1}^n k!$ has  an  asymptotic expansion for
$\log(N_n^{\ast})$  valid to any given order $N \ge 1$, valid uniformly
for all $n \ge 2$, of the form 
\begin{eqnarray*}\label{superfac}
\log  (N_n^{\ast}) &=& \frac{1}{2} n^2 \log n - \frac{3}{4}n^2 + n \log n + \big(\frac{1}{2}\log(2 \pi)  -1) \big) n + \frac{5}{12} \log n +\\
&& \quad\quad + \, c_0   + \sum_{j=1}^N  c_j (\frac{1}{n^j}) \,+ O\left(\frac{1}{n^{N+1}}\right).
\end{eqnarray*}
The constant $c_0= \frac{1}{2}\log(2 \pi) + \frac{1}{12} - \log A$
where $A=\exp \big( \frac{1}{12}- \zeta^{'}(-1)\big)$ is the Glaisher-Kinkelin constant, for $j \ge 1$ the coefficients
$c_j$ are explicitly computable  rational numbers,and the implied $O$-constant depends on $N$.

(2) 
The hyperfactorial $D_n^{\ast} = \prod_{k=1}^n k^k$ has  an  asymptotic expansion
for $\log (D_n^{\ast})$  up to any given order $N \ge 1$, valid uniformly for all $n \ge 2$, 
of the form
\begin{eqnarray*}\label{hyperfac}
\log  (D_n^{\ast}) &=& \frac{1}{2} n^2 \log n - \frac{1}{4}n^2  + \frac{1}{2} n \log n 
 + \frac{1}{12} \log n +\quad\quad\quad\quad\\ 
&&\quad\quad + \, d_0+
\sum_{j=1}^N  d_j \big( \frac{1}{n^j}\big) +
O\left(\frac{1}{n^{N+1}}\right).
\end{eqnarray*}
The constant
$d_0= \log A$,  where $A$ is the Glaisher-Kinkelin constant, 
 for $j \ge 1$ the coefficients $d_j$ are explicitly computable rational numbers, and the implied $O$-constant depends on $N$.
\end{thm}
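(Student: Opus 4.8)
The plan is to read off both expansions directly from the exact factorizations $N_n^{\ast} = \Gamma(n+1)\,G(n+1)$ and $D_n^{\ast} = \Gamma(n+1)^n/G(n+1)$ recorded in \eqref{200c} and \eqref{200d}, by substituting the all-orders asymptotic expansions of $\log\Gamma(z+1)$ and $\log G(z+1)$ stated above, evaluated at $z = n$. Since a positive real argument $n \ge 2$ lies well inside a sector $-\pi+\epsilon < \mathrm{Arg}(z)\le \pi-\epsilon$ (take $\epsilon=\pi/2$), the $O$-constants for the two input expansions depend only on the truncation order, which gives the uniformity in $n$ claimed in the theorem.

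For part (1) I would write $\log(N_n^{\ast}) = \log\Gamma(n+1) + \log G(n+1)$ and add the two expansions term by term. The leading term $\tfrac12 n^2\log n$ comes entirely from $\log G$, the term $n\log n$ entirely from $\log\Gamma$; the $-\tfrac34 n^2$ comes from $\log G$; the $n$-coefficient combines as $-1 + \tfrac12\log(2\pi)$; the $\log n$-coefficients combine as $\tfrac12 - \tfrac1{12} = \tfrac5{12}$; and the constants combine as $\tfrac12\log(2\pi) + (\tfrac1{12} - \log A) = c_0$. For the tail, the $\Gamma$-expansion contributes only odd negative powers $n^{-(2k-1)}$ and the $G$-expansion only even negative powers $n^{-2k}$, each with a coefficient that is a rational multiple of a Bernoulli number; assembling them yields $\sum_{j\ge1} c_j n^{-j}$ with $c_j \in \QQ$. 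To land the error at $O(n^{-(N+1)})$ one truncates each input at the appropriate order (about $N/2$) so the discarded terms are $O(n^{-(N+1)})$.

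For part (2) I would write $\log(D_n^{\ast}) = n\log\Gamma(n+1) - \log G(n+1)$. The one point needing care is that multiplying the $\Gamma$-expansion by $n$ shifts every power of $n$ up by one: the leading pieces reproduce $n^2\log n - n^2 + \tfrac12 n\log n + \tfrac12\log(2\pi)\,n$, and crucially the $k=1$ tail term $\tfrac{B_2}{2\cdot1}\,n^{-1}$ becomes the \emph{constant} $\tfrac1{12}$. Subtracting $\log G(n+1)$ then gives $\tfrac12 n^2\log n - \tfrac14 n^2 + \tfrac12 n\log n$ (the two $\tfrac12\log(2\pi)\,n$ contributions cancel, so there is no $n$-term), $\log n$-coefficient $\tfrac1{12}$, and constant $\tfrac1{12} - (\tfrac1{12} - \log A) = \log A = d_0$. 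The remaining tail terms $\tfrac{B_{2k}}{2k(2k-1)} n^{2-2k}$ for $k\ge2$ together with $-\tfrac{B_{2k+2}}{2k(2k+2)} n^{-2k}$ are rational multiples of negative even powers of $n$, so $d_j\in\QQ$ (indeed $d_j=0$ for odd $j$). Here multiplication by $n$ also turns the $\Gamma$-input error $O(n^{-(2N'+1)})$ into $O(n^{-2N'})$, so I would feed the $\Gamma$-expansion one order beyond the $G$-expansion to reach the overall error $O(n^{-(N+1)})$.

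There is no deep obstacle in this proof; it is bookkeeping. The only genuine hazard is keeping the error term and the ``migrating'' constant $\tfrac1{12}$ straight under the multiplication by $n$ in part (2), so I would state explicitly which truncation order is substituted into each of the two standard expansions before collecting like powers of $n$ and $\log n$.
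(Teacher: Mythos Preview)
Your proposal is correct and follows essentially the same approach as the paper: take logarithms of the identities \eqref{200c} and \eqref{200d}, substitute the standard asymptotic expansions of $\log\Gamma(z+1)$ and $\log G(z+1)$, and collect terms, with the same observation that multiplication by $n$ in part (2) shifts indices and promotes the $k=1$ tail term of the $\Gamma$-expansion to the constant $\tfrac{1}{12}$. Your write-up is in fact more explicit than the paper's about tracking the error terms under the multiplication by $n$ and about which truncation orders to feed in.
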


\begin{proof}
[Proof of Theorem \ref{th31a}]
(1) We have
$$\log(N_n^{\ast}) = \log \Gamma(n+1) + \log G(n+1).$$
Substituting the asymptotic series for $\Gamma(n+1)$ and $G(n+1)$ each term by term yields \eqref{superfac}.
Here for $k \ge 1$ we have  $c_{2k}= \frac{B_{2k+2}}{2k(2k+2)}$ while $c_{2k-1}= \frac{B_{2k}}{(2k)(2k-1)}.$

(2) We have
$$
\log(D_n^{\ast}) = n \log \Gamma(n+1) - \log G(n+1).
$$
Substituting the asymptotic series $\Gamma(n+1)$ and $G(n+1)$  term by term on the right side yields \eqref{hyperfac}.
Multiplying by $n$ in the first term on the right shifts the  coefficient indices of the asymptotic expansion of $\log \Gamma(n+1)$ 
down by $1$.
For $k \ge 1$ we have  
$$
d_{2k} = c_{2k+1}-c_{2k}= \frac{B_{2k+2}}{(2k+2)(2k+1)}-\frac{B_{2k+2}}{2k(2k+2)}= \frac{B_{2k+2}}{2k(2k+1)(2k+2)}
$$
while $d_{2k-1}= 0.$
\end{proof}

Theorem \ref{th31a} immediately yields asymptotic expansion for $\log(\G_n)$.
The resulting asymptotic behavior of  $\log(\G_n)$ is of smaller order of magnitude, since the
leading  term in the asymptotic series of  $\log(N_n^{\ast})$ and $\log (D_n^{\ast})$ 
on the right side of \eqref{cancel} cancel.

\begin{thm}\label{th22b}
The  complete binomial products $\G_n=\prod_{j=1}^n {{n}\choose{j}}$ have an asymptotic expansion for  $\log(\G_n)$ 
to any given order $N \ge 1$, valid uniformly for all $n \ge 2$, of the form
\begin{equation}
\notag
\log  (\G_n)=\frac{1}{2}n^2-\frac{1}{2}n\log n+ \big(1- \frac{1}{2} \log (2 \pi))\big)\, n -\frac{1}{3}\log n+g_0+\sum_{j=1}^N g_j \big( \frac{1}{n^j}\big)  + 
O\left(\frac{1}{n^{N+1}}\right).
\end{equation}
Here $g_0=  -\frac{1}{2} \log (2 \pi)  - \frac{1}{12}+ 2 \log A$ where $A$ is the Glaisher-Kinkelin constant,
 for $j \ge 1$  the coefficients $g_j$  are explicitly computable rational numbers, and the implied $O$-constant depends on $N$.
\end{thm}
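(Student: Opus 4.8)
\emph{Proof proposal.} The plan is to reduce everything to the expansions already established in Theorem \ref{th31b}. Since $\G_n = D_n^{\ast}/N_n^{\ast}$ (see \eqref{gndef2}), one has $\log(\G_n) = \log(D_n^{\ast}) - \log(N_n^{\ast})$, so the strategy is simply to subtract the two asymptotic expansions of Theorem \ref{th31b} term by term. As an independent check one can instead start from \eqref{gndef2}, writing $\log(\G_n) = (n-1)\log\Gamma(n+1) - 2\log G(n+1)$, and substitute the Stirling expansion of $\log\Gamma(n+1)$ and the Barnes $G$-function expansion of $\log G(n+1)$ directly; both routes must produce the same coefficients.

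Carrying out the subtraction, I would first note that the leading $\frac12 n^2\log n$ terms of $\log(D_n^{\ast})$ and $\log(N_n^{\ast})$ cancel --- a cancellation already flagged in Remark \ref{rem32}, which provides a useful sanity check --- while the surviving terms combine as follows: the $n^2$ terms give $-\frac14 - (-\frac34) = \frac12$; the $n\log n$ terms give $\frac12 - 1 = -\frac12$; the $n$ terms give $0 - (\frac12\log(2\pi) - 1) = 1 - \frac12\log(2\pi)$; the $\log n$ terms give $\frac1{12} - \frac5{12} = -\frac13$; and the constant terms give $d_0 - c_0 = \log A - (\frac12\log(2\pi) + \frac1{12} - \log A) = 2\log A - \frac12\log(2\pi) - \frac1{12} = g_0$. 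For $j \ge 1$ the coefficient of $1/n^j$ is $g_j = d_j - c_j$; since Theorem \ref{th31b} exhibits $c_j$ and $d_j$ as explicit rational combinations of Bernoulli numbers (for instance $g_{2k-1} = -c_{2k-1} = -\frac{B_{2k}}{2k(2k-1)}$ and $g_{2k} = \frac{B_{2k+2}}{2k(2k+1)(2k+2)} - \frac{B_{2k+2}}{2k(2k+2)}$), each $g_j$ is an explicitly computable rational number. Finally, the remainder is a difference of two $O(1/n^{N+1})$ remainders, hence again $O(1/n^{N+1})$ with $O$-constant depending only on $N$, and uniformity in $n \ge 2$ is inherited from Theorem \ref{th31b}.

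I expect no genuine obstacle: essentially all the analytic work is already packaged in Theorem \ref{th31b} (the Stirling and Barnes $G$-function expansions, plus the index-shift bookkeeping forced by the factor $\Gamma(n+1)^n$ in $D_n^{\ast}$). The one place demanding care is the arithmetic of the constant term, where the two $\log A$ contributions \emph{add} rather than cancel and a leftover $-\frac12\log(2\pi)$ survives; getting the sign and the coefficient of $\log(2\pi)$ right is the only subtlety. As with its constituents, the resulting series is asymptotic but divergent when extended to $N = \infty$, since $|g_j|$ inherits the super-exponential growth of the even Bernoulli numbers --- consistent with the divergence already noted for $\log(D_n^{\ast})$ and $\log(N_n^{\ast})$.
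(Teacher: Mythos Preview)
Your proposal is correct and follows essentially the same route as the paper: write $\log(\G_n)=\log(D_n^{\ast})-\log(N_n^{\ast})$ and subtract the two expansions of Theorem~\ref{th31b} term by term, obtaining $g_j=d_j-c_j$. You carry out the coefficient bookkeeping in somewhat more detail than the paper does, but the argument is the same.
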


\begin{proof}
We have 
\begin{equation}\label{cancel}
\log (\G_n) = \log (D_n^{\ast}) - \log (N_n^{\ast}).
\end{equation}
Direct substation from Theorem \ref{th31a} then gives the result.
Here the terms $g_j$ for $j \ge 1$ are given by $g_j= d_j - c_{j}$, so are 
$$
g_{2k} = d_{2k} - c_{2k} = -\frac{B_{2k+2}}{2k(2k+1)(2k+2)} - \frac{B_{2k+2}}{2k(2k+2)}= -\frac{B_{2k+2}}{2k(2k+1)},
$$
while
$g_{2k-1}= -c_{2k-1} = -\frac{B_{2k}}{2k(2k-1)}$.
\end{proof}

 Table A.1 below gives coefficients of the first few terms $\frac{1}{n^k}$ in the asymptotic expansions above
 in Section 3.
The small size of the coefficients  in the table is quite misleading; later coefficients become very large, 
since the Bernoulli numbers satisfy $|B_{2n}| \sim 4 \sqrt{ \pi n} (\frac{n}{\pi e})^{2n}$ as $n \to \infty$. \medskip

%
%

\begin{minipage}{\linewidth}
\begin{center}
\begin{tabular}{| c || c | c | c | c | c | c |}
\hline
$\mbox{Coefficient}$ & $k=1$ & $k=2$  &  $k=3$  & $k=4$ & $k=5$  & $k=6$\\
\hline
~&~&~&~&~&~&~\\
$\log \Gamma(z+1)$  &   $\frac{1}{12}$  & $0$  &  $-\frac{1}{360}$   &  $0$  & $\frac{1}{1260}$  & $0$\\
~&~&~&~&~&~&~\\
$\log G(z+1)$  &   $0$  & $-\frac{1}{240}$  &  $0$  &  $\frac{1}{1008}$  & $0$  & $-\frac{1}{1440}$\\
~&~&~&~&~&~&~\\
$c_k$ &   $\frac{1}{12}$  & $-\frac{1}{240}$  &  $-\frac{1}{360}$   &  $\frac{1}{1008}$  & $\frac{1}{1260}$  & $-\frac{1}{1440}$\\
~&~&~&~&~&~&~\\
$d_k$ &   $0$  & $\frac{1}{720}$  &  $0$  &  -$\frac{1}{5040}$  & $0$  & $\frac{1}{10080}$\\
~&~&~&~&~&~&~\\
$g_k$ &   $-\frac{1}{12}$  & $\frac{1}{180}$  &  $\frac{1}{360}$  &  $-\frac{1}{860}$  & $-\frac{1}{1260}$   & $\frac{1}{1260}$ \\  
~&~&~&~&~&~&~\\
\hline
\end{tabular} \par
\bigskip
\hskip 0.5in {\rm TABLE A.1.}  
{\em Asymptotic expansion coefficients $c_k, d_k, g_k$.}
\newline
\newline
\end{center}
\end{minipage}

\begin{rem}\label{rem93}
(1) We may extend $\G_n$ to 
an analytic function $\G^{an}(z)$ of a complex variable $z$ on the complex plane cut along the nonpositive real axis, 
 using the right side of \eqref{gndef2} as a definition:
$$
\G^{an}(z) := \frac{\Gamma(z+1)^{z-1}}{G(z+1)^2}.
$$
Since the Gamma function has no zeros, the function $\log \Gamma(z+1)$
is well-defined on the cut plane,  
and we may set $\Gamma(z+1)^{z-1} := \exp ( (z-1) \log \Gamma(z+1))$, choosing that branch of the logarithm that is 
real on the positive real axis.  The function $\G^{an}(z)$ is not a meromorphic function; instead, it analytically continues to
a multi-valued function on a suitable Riemann surface which covers the complex plane punctured at the negative integers.
It  is an example of an ``endlessly continuable" function, as discussed in Sternin and Shalatov \cite{SS96} or  Sauzin \cite{Sau13}.

(2) For number-theoretic applications (as in \cite{LM14r}) 
one extends the values $\G_n$  to  positive real $x$ another way,  making it a step function
setting  $\G_x:=\G_{ \lfloor x\rfloor}$.
For the step function definition the  function $\log (\G_x)$,  viewed as  a function of a real variable $x$,  has jumps of size $\gg n$
at integer values of $x$. These jumps are of much larger size than most terms in the asymptotic expansion of Theorem \ref{th22b}.
In this case the  asymptotic expansion in Theorem \ref{th22b} 
is valid to all orders  $\frac{1}{n^k}$  exactly  at {\em integer} points $x=n$.

\end{rem}
%
%
%
\section{Appendix B: Equality of $\W_b(\G_n)$ and $\W_b^{\ast}(\G_n)$}\label{secB}

In this  Appendix we shows that the functions $\W_b(\G_n)$ and $\W_b^{\ast}(\G_n)$ 
introduced in Section \ref{sec52} and \ref{sec31} are equal. 
Recall from Section \ref{sec52} that
$$
\W_b(\G_n) := \frac{1}{b-1} \Big(2\Sum_b(n) - (n-1) \dgt_b(n)   \Big).
$$
Recall from  Section \ref{sec31} that
$$
\W_b^{\ast}(\G_n) := \sum_{j=1}^kja_jp^j-\left(\sum_{j=1}^ka_j\left(\frac{p^j-1}{p-1}\right)+
\sum_{j=0}^k\frac{1}{p^{j+1}}\Big(\sum_{\uu=0}^ja_{\ell}p^{\uu}\Big)\Big( \sum_{v=j+1}^ka_vp^{v}\Big)\right).
$$
Both these functions are  expressed  in terms of the base $b$ expansion $n = \sum_{j=0}^k a_j b^j$.
These functions were defined so that  they satisfy 
$\W_p(\G_n)= \W_p^{\ast}(\G_n) = \ord_p(\G_n)$ for  $ p$ a prime.

\begin{thm}\label{thB1}
For each integer  $b \ge 2$ there holds
$$
\W_b(\G_n) = \W_b^{\ast}(\G_n) \quad \mbox{for all} \quad n \ge 1.
$$
\end{thm}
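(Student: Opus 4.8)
The plan is to identify both $\W_b(\G_n)$ and $\W_b^{\ast}(\G_n)$ with one common quantity: the total number of base-$b$ carries, summed over all additive splittings of $n$. For an integer base $b \ge 2$ and integers $x,y \ge 0$, let $\gamma_b(x,y)$ denote the number of carries produced when $x$ and $y$ are added in base $b$, and set
\[
C_b(n) := \sum_{t=0}^{n} \gamma_b(t,\,n-t).
\]
I will show $\W_b^{\ast}(\G_n) = C_b(n)$ and $\W_b(\G_n) = C_b(n)$; combining these proves the theorem. (For $b=p$ prime both identities are already implicit in the excerpt, via Kummer's theorem \ref{th25} and Theorem \ref{opgn}; the work is to see that neither needs primality.)

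First I would establish $\W_b^{\ast}(\G_n) = C_b(n)$. The key observation is that the whole derivation leading to Theorem \ref{opgn} is purely combinatorial. For general $b$, define $c_j(n,t)\in\{0,1\}$ to record whether adding $t$ to $n-t$ in base $b$ produces a carry out of position $j$, and set $c_j(n) := \sum_{t=0}^n c_j(n,t)$, so that $C_b(n) = \sum_{j\ge 0} c_j(n)$ by interchanging finite sums. The proof of Lemma \ref{cjdef} is just a count of base-$b$ digit strings satisfying an inequality, so it holds verbatim with $p$ replaced by $b$, giving the same closed form for $c_j(n)$; and the algebraic resummation carried out in equations \eqref{ppart1} through \eqref{ppart4} is a formal manipulation of finite sums that likewise never uses primality. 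Reading that argument with $b$ in place of $p$ yields $C_b(n) = \sum_{j\ge 0} c_j(n) = \W_b^{\ast}(\G_n)$.

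Next I would establish $\W_b(\G_n) = C_b(n)$ through the base-$b$ analogue of the digit-sum form of Kummer's theorem (Proposition \ref{prop310}), which is valid for \emph{every} base:
\[
\gamma_b(x,y) \;=\; \frac{1}{b-1}\bigl(\dgt_b(x)+\dgt_b(y)-\dgt_b(x+y)\bigr).
\]
Indeed, when $x+y$ is formed digit by digit the digit of $x+y$ in position $j$ equals $x_j+y_j+\epsilon_j-b\,\epsilon_{j+1}$, where $\epsilon_j\in\{0,1\}$ is the incoming carry and $\epsilon_0=0$; summing over $j$ gives $\dgt_b(x+y)=\dgt_b(x)+\dgt_b(y)-(b-1)\sum_{j\ge 1}\epsilon_j$, and $\sum_{j\ge 1}\epsilon_j=\gamma_b(x,y)$, with cascading carries automatically accounted for since each carry-out event is counted exactly once. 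Summing this identity over $0\le t\le n$ with $x=t$, $y=n-t$, and using $\sum_{t=0}^n \dgt_b(t)=\sum_{t=0}^n \dgt_b(n-t)=\Sum_b(n)+\dgt_b(n)$, I obtain
\[
C_b(n)=\frac{1}{b-1}\Bigl(2\bigl(\Sum_b(n)+\dgt_b(n)\bigr)-(n+1)\dgt_b(n)\Bigr)=\frac{1}{b-1}\bigl(2\Sum_b(n)-(n-1)\dgt_b(n)\bigr),
\]
which is precisely $\W_b(\G_n)$ by Definition \ref{def53}.

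The main obstacle is not any single computation but the bookkeeping: one must check carefully that every ingredient being reused — Lemma \ref{cjdef}, the resummation in the proof of Theorem \ref{opgn}, and the carry/digit-sum identity — is genuinely base-independent. The one place requiring a touch of care is the claim that a single carry drops the digit sum by exactly $b-1$ even across a cascade; this is handled by the position-by-position identity above rather than by tracking cascades directly. Once that is in place, the theorem follows by assembling the two displayed identities, each of which is an already-available fact reinterpreted at an arbitrary integer base.
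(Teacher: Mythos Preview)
Your proof is correct, and it takes a genuinely different route from the paper's. The paper proceeds by direct computation: it rewrites each of $\W_b(\G_n)$ and $\W_b^{\ast}(\G_n)$ in terms of floor functions $\lfloor n/b^j\rfloor$ and $\lfloor m/b^j\rfloor$, subtracts the two expressions, and shows the difference vanishes term-by-term (for each fixed $j$) via a short counting identity. Your approach instead identifies a common combinatorial meaning, the total carry count $C_b(n)$, and observes that the derivations of Theorems \ref{th39} and \ref{opgn} already establish $\W_b(\G_n)=C_b(n)$ and $\W_b^{\ast}(\G_n)=C_b(n)$ respectively, with primality of the base playing no role anywhere in those arguments. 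Your route is more conceptual and explains \emph{why} the two formulas agree (they count the same thing), while reusing work already done; the paper's route is a self-contained algebraic verification that does not send the reader back to earlier proofs, and as a byproduct furnishes new floor-function expressions for both quantities. Both arguments are short; yours makes the base-independence of the earlier sections explicit, which is arguably the real content of the theorem.
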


\begin{proof}
We re-express both functions using the floor function. 
We first have
\begin{equation*}\begin{aligned}
\W_b(\G_n) 
&= \frac{1}{b-1}\left(2\sum_{m=1}^{n-1}\left(m-(b-1)\sum_{j\ge 1}\lfloor{\frac{m}{b^j}\rfloor}\right)-(n-1)\sum_{j\ge 0}\left(\lfloor{\frac{n}{b^j}\rfloor}-b\lfloor{\frac{n}{p^{j+1}}\rfloor}\right)\right)\\
&=(n-1)\sum_{j\ge 1}\lfloor{\frac{n}{b^j}\rfloor}-2\sum_{m=1}^{n-1} \Big( \sum_{j\ge1}\lfloor{\frac{m}{b^j}\rfloor}\Big).\\
\end{aligned}\end{equation*}
We also have 
\begin{equation*}\begin{aligned}
 \W_b^{\ast}(\G_n) &=
\sum_{j=1}^k jb^j\left(\lfloor{\frac{n}{b^j}\rfloor}-b\lfloor{\frac{n}{b^{j+1}}\rfloor}\right)
-\left(\sum_{j=1}^ka_j\left(\frac{b^j-1}{b-1}\right)+
\sum_{j=0}^k\frac{1}{b^{j+1}}\Big(\sum_{\uu=0}^ja_{\uu}p^{\uu}\Big)\Big( \sum_{v=j+1}^ka_vb^{v}\Big)\right)\\
&=\sum_{j\ge1}b^j\lfloor{\frac{n}{b^j}\rfloor}-\left(\sum_{j=1}^k\left(\lfloor{\frac{n}{b^j}\rfloor}-b\lfloor{\frac{n}{b^{j+1}}\rfloor}\right)\left(\frac{b^j-1}{b-1}\right)+
\sum_{j=0}^k\frac{1}{b^{j+1}}\Big(\sum_{\uu=0}^ja_{\uu}b^{\uu}\Big)\Big( \sum_{v=j+1}^ka_v b^{v}\Big)\right)\\
&=\sum_{j\ge1}b^j\lfloor{\frac{n}{b^j}\rfloor}-\left(\sum_{j\ge1}\lfloor{\frac{n}{b^j}\rfloor}+
\sum_{j=0}^k\frac{1}{b^{j+1}}\Big(\sum_{\uu=0}^ja_{\uu}b^{\uu}\Big)\Big( \sum_{v=j+1}^ka_vb^{v}\Big)\right)\\
&=\sum_{j\ge1}b^j\lfloor{\frac{n}{b^j}\rfloor}-\left(\sum_{j\ge1}\lfloor{\frac{n}{b^j}\rfloor}+
\sum_{j\ge1}\left(\frac{n}{b^j}-\lfloor{\frac{n}{b^j}\rfloor}\right)\left(b^j\lfloor{\frac{n}{b^j}\rfloor}\right)\right).\\
\end{aligned}
\end{equation*}
Combining these two formulas yields
  \begin{equation*}\begin{aligned}
\W_b(\G_n)-  \W_b^{\ast}(\G_n) &=
n\sum_{j\ge 1}\Big\lfloor{\frac{n}{b^j}\Big\rfloor}-\sum_{m=1}^{n-1}\Big(\sum_{j\ge 1}\Big\lfloor{\frac{m}{b^j}\Big\rfloor}\Big)
-\sum_{j\ge 1}b^j \left(\sum_{k=1}^{\lfloor{n/b^j}\rfloor}k\right)\\
&= \sum_{j\ge 1}\left(n\Big\lfloor{\frac{n}{b^j}\Big\rfloor}-\sum_{m=1}^{n-1}\Big\lfloor{\frac{m}{b^j}\Big\rfloor}
-b^j\left(\sum_{k=1}^{\lfloor{n/b^j\rfloor}}k\right)\right).\\
\end{aligned}
\end{equation*}
We  assert that each inner sum (for fixed $j$) on the right side of this sum is $0$. 
To see this, we have
\begin{eqnarray*}
n\Big\lfloor{\frac{n}{b^j}\Big\rfloor}-\sum_{m=1}^{n-1}\Big\lfloor{\frac{m}{b^j}\Big\rfloor}
-b^j\left(\sum_{k=1}^{\lfloor{n/b^j\rfloor}}k\right) &=&
\sum_{k=1}^{\lfloor{n/b^j\rfloor}}(n-b^j k)-\sum_{m=1}^{n-1}\lfloor{\frac{m}{b^j}\rfloor}.\\
&=&\sum_{k=1}^{\lfloor{n/b^j\rfloor}}(n-b^j k)-\sum_{m=1}^{n-1}\lfloor{\frac{n-m}{b^j}}\rfloor =0.
\end{eqnarray*}
The equality to zero  on the last line follows by a counting argument. 
We evaluate the last sum on the right in blocks of length $b^j$, taking $1 + (k-1)b^j \le m \le kb^j$ for $1 \le k \le \lfloor n/b^j \rfloor$.
If $n = b_j \lfloor n/b^j \rfloor+ a$ then the first block contributes $b^j \lfloor n/b_j \rfloor -(b^j -a) = n - b^j$. The
$k$-th block contributes $n - b^jk$ similarly, and a possible final ``short" block  contributes $0$. 
\end{proof}


\subsection*{Acknowledgments}
We thank J. Arias de Reyna for supplying the plot in Fig. 2 and for several corrections.
We thank the reviewer for helpful comments. 
The first author is indebted to Harm Derksen 
for raising   the topic of products  of Farey fractions , see \cite{DL11a}, \cite{DL11b}.
The  work  of the second author began as part of an REU project at the University of Michigan 
in 2013 with the first author as mentor. Work of the first author was
partially supported by NSF grants DMS-1101373 and DMS-1401224.

%
%
%


\end{document}